\documentclass[Royal,sagev,times]{sagej}

\usepackage{amsmath,amssymb,amsthm}
\usepackage{graphicx}
\usepackage[colorlinks,bookmarksopen,bookmarksnumbered,citecolor=blue,urlcolor=blue]{hyperref}

\newcommand{\R}{\mathbb{R}}
\newcommand{\C}{\mathbb{C}}

\theoremstyle{plain}
\newtheorem{theorem}{Theorem}[section]
\newtheorem{lemma}[theorem]{Lemma}

\theoremstyle{definition}

\numberwithin{equation}{section}

\makeatletter
\def\@maketitle{%
  \null
  \vskip -2em% 适当调整顶部间距
  \begin{center}%
    \let\footnote\thanks
    {\LARGE \textbf{\@title} \par}%
    \vskip 1.5em%
    {\large
     \lineskip .5em%
     \begin{tabular}[t]{c}%
       \@author
     \end{tabular}\par}%
  \end{center}%
  \par
  \vskip 1.5em%
  \noindent\usebox\absbox
  \par
  \vskip 1em%
  \noindent\textbf{}\ \@keywords
  \par
  \vskip 2em%
}
\makeatother

\makeatletter
\def\ps@sagepage{%
  \let\@mkboth\@gobbletwo
  \def\@evenhead{%
    \parbox{\textwidth}{%
      \normalsize\sagesf\thepage\hfill\\[-6pt]%
      \noindent\rule{\textwidth}{0.25pt}%
    }%
  }%
  \def\@oddhead{%
    \parbox{\textwidth}{%
      \normalsize\sagesf\hfill\thepage\\[-6pt]%
      \noindent\rule{\textwidth}{0.25pt}%
    }%
  }%
  \def\@evenfoot{%
    \parbox[t]{\textwidth}{%
      \scriptsize{\it Prepared using \textsf{\journalclass}}%
    }%
  }%
  \def\@oddfoot{\@evenfoot}%
}
\def\ps@title{%
  \def\@oddhead{%
    \parbox{\textwidth}{%
      \mbox{}\\[-1pt]%
      \noindent\rule{\textwidth}{0.5pt}%
    }%
  }%
  \let\@evenhead\@oddhead
  \def\@oddfoot{%
    \parbox[t]{\textwidth}{%
      \scriptsize{\it Prepared using \textsf{\journalclass}}%
    }%
  }%
  \let\@evenfoot\@oddfoot
}
\makeatother

\makeatletter
\def\ps@sagepage{%
  \let\@mkboth\@gobbletwo
  \def\@evenhead{%
    \parbox{\textwidth}{%
      \normalsize\sagesf\thepage\hfill\\[-6pt]%
      \noindent\rule{\textwidth}{0.25pt}%
    }%
  }%
  \def\@oddhead{%
    \parbox{\textwidth}{%
      \normalsize\sagesf\hfill\thepage\\[-6pt]%
      \noindent\rule{\textwidth}{0.25pt}%
    }%
  }%
  \def\@evenfoot{\parbox[t]{\textwidth}{}}%
  \def\@oddfoot{\@evenfoot}%
}
\def\ps@title{%
  \def\@oddhead{%
    \parbox{\textwidth}{%
      \mbox{}\\[-1pt]%
      \noindent\rule{\textwidth}{0.5pt}%
    }%
  }%
  \let\@evenhead\@oddhead
  \def\@oddfoot{\parbox[t]{\textwidth}{}}%
  \let\@evenfoot\@oddfoot
}
\makeatother
\begin{document}
\pagestyle{sagepage}
% ========== 页眉==========
\runninghead{Wang and Chang}

% ========== 标题 ==========
\title{Normalized solutions for a nonlinear Dirac equation with an inhomogeneous nonlinearity}

\author{Hao Wang\thanks{\texttt{wanghao168@nenu.edu.cn}}\affilnum{1} and Xiaojun Chang\thanks{\texttt{changxj100@nenu.edu.cn}}\affilnum{2}}

\affiliation{
  \affilnum{1}School of Mathematics and Statistics, Northeast Normal University, Changchun 130024, Jilin, PR China\\
  \affilnum{2}School of Mathematics and Statistics \& Center for Mathematics and Interdisciplinary Sciences, Northeast Normal University, Changchun 130024, Jilin, PR China
}

\corrauth{Xiaojun Chang} %School of Mathematics and Statistics \& Center for Mathematics and Interdisciplinary Sciences, Northeast Normal University, Changchun 130024, Jilin, PR China}
\email{changxj100@nenu.edu.cn}

\begin{abstract}
We study the existence of normalized solutions for the nonlinear Dirac equation
\[
\begin{cases}
    -i\sum\limits_{k=1}^3\alpha_k\partial_k u + m\beta u - |x|^{-b}|u|^{p-2}u = \mu u, \quad x\in\mathbb{R}^3, \\
    \int_{\mathbb{R}^3}|u|^2 dx = a,
\end{cases}
\]
where $b\in(0,1)$, $p\in(2,3-b)$, $a>0$ is a prescribed mass, and $\mu\in\mathbb{R}$ is a Lagrange multiplier. For any $b\in(0,1)$ and $p\in(2,3-b)$, we establish the existence of a normalized solution for all sufficiently small masses $a>0$, with $\mu\in(0,m)$ and $u\in H^{\frac12}(\mathbb{R}^3;\mathbb{C}^4)$.
Our results cover the full range of nonlinearities, including mass-subcritical, mass-critical, and mass-supercritical cases.
The main challenges are the strongly indefinite nature of the Dirac operator and the loss of translation invariance caused by the singular weight $|x|^{-b}$. We overcome these difficulties by combining a constrained min-max reduction method with a novel weighted compact embedding in $L^p(\mathbb{R}^3,|x|^{-b}\,\mathrm{d}x;\mathbb{C}^4)$. This approach circumvents the singular potential at the origin and yields a unified existence theory valid in the small-mass regime.
\end{abstract}

\keywords{Nonlinear Dirac equations, Normalized solutions, Singular weight, Inhomogeneous nonlinearity, Variational methods}

% ========== 生成标题栏 ==========

\maketitle

\section{Introduction}
This paper concerns the existence of normalized solutions for the following nonlinear Dirac equation with a singular inhomogeneous nonlinearity:
\begin{equation}\label{1.1}
\begin{cases}
    -i\alpha\cdot\nabla u + m\beta u - |x|^{-b}|u|^{p-2}u = \mu u, \quad x\in\mathbb{R}^3, \\
    \displaystyle\int_{\mathbb{R}^3}|u|^2 dx = a,
\end{cases}
\end{equation}
where $u:\mathbb{R}^3\to\mathbb{C}^4$ is the Dirac spinor field, $m>0$ is the mass of the particle, $\mu\in\mathbb{R}$ serves as a Lagrange multiplier, $a>0$ is a prescribed value, and $b\in(0,1)$, $p\in(2,3-b)$. The operator $\alpha\cdot\nabla$ is defined as $\sum\limits_{k=1}^3\alpha_k\partial_k$ with $\partial_k=\frac{\partial}{\partial x_k}$, where $\alpha_1,\alpha_2,\alpha_3$ and $\beta$ are the $4\times4$ Pauli-Dirac matrices (in $2\times2$ blocks):
\[
\alpha_k=\begin{pmatrix}
0 & \sigma_k \\
\sigma_k & 0
\end{pmatrix},~
\beta=\begin{pmatrix}
I & 0 \\
0 & -I
\end{pmatrix},
\]
with
\[
\sigma_1=\begin{pmatrix}
0 & 1\\
1 & 0
\end{pmatrix},
\sigma_2=\begin{pmatrix}
0 & -i\\
i & 0
\end{pmatrix},
\sigma_3=\begin{pmatrix}
1 & 0\\
0 & -1
\end{pmatrix}.
\]

The Dirac equation stands as a cornerstone of relativistic quantum mechanics \cite{Thaller-1992}, providing a first-order description of relativistic fermions. In its free form, the Dirac operator $H_0 = -i\alpha\cdot\nabla + m\beta$ is obtained from the energy-momentum relation $E^2 = c^2|q|^2 + m^2c^4$ via the correspondence $q \leftrightarrow -i\hbar\nabla$, where $c$ is the speed of light and $\hbar$ is Planck's constant. Throughout this paper, we adopt the natural units $c = \hbar = 1$.
The time-dependent nonlinear Dirac equation, from which \eqref{1.1} is derived by seeking stationary states of the form $\psi(t,x)=e^{-i\mu t}u(x)$, serves as an influential model in atomic, nuclear, and gravitational physics \cite{Esteban-2008}. In these contexts, the nonlinearity is motivated by self-interacting fermion models in quantum electrodynamics and by effective descriptions of matter under extreme conditions \cite{Finkelstein-1951, Ng-2009}.

The study of normalized solutions is physically motivated by the time conservation of the $L^2$-norm, which represents the total particle number or optical power \cite{Kibble-1978, Weinberg-1989}.
Furthermore, the variational characterization of these solutions provides a robust framework for analyzing orbital stability, a strategy extensively validated for the nonlinear Schr\"odinger (NLS) equation \cite{Cazenave-1982, Soave-2020-JFA, Jeanjean-2022}.

Mathematically, problem \eqref{1.1} is known as the
\emph{prescribed mass problem}. Unlike the \emph{fixed-frequency
problem}, where $\mu$ is given a priori \cite{Esteban-1995, Esteban-1996,Bartsch-2006, Ding-2008, Ding-2012, Ding-2015,Esteban-1999, Figueiredo-2017, Borrelli-2019, Wang-2021}, the \emph{prescribed mass problem} requires determining both $u$ and $\mu$ simultaneously. This seemingly minor shift introduces profound analytical challenges specific to  the Dirac equation. The main difficulty stems from the strongly indefinite nature of the free Dirac operator $H_0 = -i\alpha\cdot\nabla + m\beta$,
whose spectrum $\sigma(H_0)=(-\infty,-m]\cup[m,+\infty)$ is purely continuous and unbounded both from below and above.  Consequently, the associated energy functional is neither bounded from below nor above on the $L^2$-sphere constraint, rendering classical variational techniques used for the NLS equation \cite{Jeanjean-1997, Bartsch-2013} inapplicable.

The variational study of normalized solutions for strongly indefinite problems began with the pioneering work of Buffoni and Jeanjean \cite{Buffoni-1993},  who developed a minimax characterization combined with a Lyapunov-Schmidt reduction to treat a semilinear elliptic equation with a spectral gap nonlinearity.
Subsequently, Buffoni, Esteban, and S\'er\'e \cite{Buffoni-2006} adapted an unconstrained penalization technique of Esteban and S\'er\'e  \cite{Esteban-1999} into the \(L^2\)-constrained setting and integrated it with the variational Lyapunov-Schmidt reduction, thereby establishing a robust and flexible framework for studying strongly indefinite problems under a mass constraint that applies to both local and nonlocal nonlinearities.

In recent years, the study of normalized solutions for nonlinear Dirac equations has attracted considerable interest. For the Maxwell-Dirac system, Nolasco \cite{Nolasco-2021}  proved the existence of a normalized solitary wave by implementing a min-max procedure coupled with concentration-compactness arguments. This strategy is to maximize the energy functional on the set of all functions whose positive-energy component belongs to a fixed one-dimensional subspace
(while the whole negative-energy space is free), and then to minimize the resulting maximum over all such subspaces. A different approach was taken by
Ding, Yu, and Zhao \cite{Ding-2023} for the Dirac equation
\[
\begin{cases}
    -ic\sum\limits_{k=1}^3\alpha_k\partial_k u + mc^2\beta u -\omega u = K(x)|u|^{p-2}u, \\[3pt]
    \displaystyle\int_{\mathbb{R}^3}|u|^2 \, dx = 1,
\end{cases}
\]
where $p\in(2,\frac83)$ and $K$ is a nonnegative bounded weight vanishing at infinity. Their proof combines a Lyapunov-Schmidt reduction, a penalization term, and a mountain-pass argument to construct $L^2$-normalized solitary waves. Extending this line of research, Bahrouni et al. \cite{Bahrouni-2023} drew on ideas from \cite{Buffoni-1993} to develop an approach that integrates perturbation methods with a Lyapunov-Schmidt reduction for a broader class of non-autonomous nonlinearities $f(x,|u|)u$. Within this framework, they proved existence of normalized solutions via the combined perturbation-reduction argument and established multiplicity results using
 the Ljusternik-Schnirelmann theorem.
More recently, Coti Zelati and Nolasco \cite{CotiZelati-2023, CotiZelati-2025} extended the variational strategy of \cite{Nolasco-2021} to other relativistic systems: first to the Klein-Gordon-Dirac system, and then to a nonlinear Dirac equation with Soler-type nonlinearities $|\langle u, \beta u\rangle|^{\frac{\alpha}{2}}$ or pure power nonlinearities $|u|^{\alpha}$, where $\alpha\in(2,\frac{8}{3}]$. In both cases, they established the existence of normalized solutions.

Parallel to the investigation of existence and multiplicity, the nonrelativistic limit of normalized solutions has emerged as a topic of independent interest.
Chen, Ding, Guo, and Wang \cite{Chen-2024} first studied this limit for the system
\begin{eqnarray}\label{Dirac-2}
\begin{cases}
    -ic\sum\limits_{k=1}^3\alpha_k\partial_k u + mc^2\beta u-\Gamma\ast(K|u|^\kappa)K|u|^{\kappa-2}u - P|u|^{s-2}u = \omega u, \\
    \displaystyle\int_{\mathbb{R}^3}|u|^2 dx = 1,
\end{cases}
\end{eqnarray}
which involves decaying weights and combines both local and nonlocal nonlinearities, in the mass-subcritical and mass-critical regimes with $\kappa\in [2, \frac{7}{3})$ and $s\in (2, \frac{8}{3}]$.  They proved that any normalized solution of (\ref{Dirac-2}) converges to a solution of the corresponding nonlinear Schr\"{o}dinger equation as $c\to +\infty$.  Subsequently,
Chen, Ding, Guo, and Yu \cite{Chen-2025} extended the existence theory to the mass-supercritical regime for the pure power nonlinearity $|u|^{p-2}u$ with $p\in(\frac{8}{3},3)$.
By restricting the reduced functional to a carefully chosen open subset on which it is bounded below, they proved the existence of energy ground states and established the equivalence between action and energy ground states across the full Sobolev subcritical range $p\in(2,3)$.
For further results on multiplicity, Kerr-type nonlinearities, and generalized Dirac-Coulomb systems, we refer the reader to \cite{Chen-2026,Chen-2025-JDE} and the references therein.

It is worth noting that, in the Schr\"{o}dinger setting, inhomogeneous nonlinearities involving singular weights such as $|x|^{-b}$ have been intensively studied.
Genoud and Stuart \cite{Genoud-2008} established the existence and stability of standing waves for the equation
\[
-\Delta u+\lambda u=V(x)|u|^{p-1}u,\quad x\in\mathbb{R}^N,
\]
where the potential $V$ decays at infinity like $|x|^{-b}$ with $b\in(0,2)$. Their work also provided a sharp Gagliardo-Nirenberg inequality adapted to such singular weight.
Subsequent research has branched into several directions.  Regarding dynamics, Farah \cite{Farah-2016} derived global well-posedness and blow-up criteria for the supercritical inhomogeneous NLS equation. Meanwhile, Saanouni and Shi \cite{Saanouni-2024} developed a local theory in homogeneous Sobolev spaces for the radial fractional inhomogeneous NLS equation with a focusing singular nonlinearity. Within the framework of normalized solutions, Gou \cite{Gou-2024} conducted a variational analysis for combined inhomogeneous nonlinearities. More recently, Gou, Majdoub, and Saanouni \cite{Gou-2026} characterized the scattering versus blow-up dichotomy below the ground state energy for competing inhomogeneous singular NLS equations.

Despite the aforementioned progress for the Schr\"{o}dinger equation, the corresponding problems for the Dirac equation, particularly for normalized solutions, remain largely unexplored.
The locally singular weight $|x|^{-b}$ breaks translation invariance, making standard concentration-compactness arguments inapplicable.
Moreover, the singularity at the origin requires sharper estimates to ensure the energy functional is well defined and to recover compactness, which must be achieved concurrently with handling the Dirac operator's strong indefiniteness. The confluence of these three obstacles, i.e., strong indefiniteness, a locally unbounded nonlinearity, and the loss of translation invariance, is precisely what distinguishes our problem from previous works and constitutes its core mathematical difficulty.  To the best of our knowledge, this paper appears to be the first to address normalized solutions for the Dirac equation under such singular inhomogeneous conditions.

Motivated by these challenges, we establish the existence of normalized solutions to \eqref{1.1} for $b\in(0,1)$ and $p\in(2,3-b)$. The restriction $p<3-b$ is intrinsic in this setting, as it reflects the regularity loss caused by the singular weight \cite{Cotsiolis-2002} and defines the Sobolev-subcritical range for the problem. Our result covers the entire admissible range $p\in(2,3-b)$ in a unified way, effectively removing the traditional barriers between mass-subcritical, mass-critical and mass-supercritical regimes.

Our main result reads as follows.
\begin{theorem}\label{Th1}
Let $b\in(0,1)$ and $p\in(2,3-b)$. Then there exists a constant $a^*>0$ such that for every $a\in(0,a^*)$, there exist $\mu\in(0,m)$ and $u\in H^{\frac12}(\mathbb{R}^3;\mathbb{C}^4)$ for which $(\mu,u)$ is a normalized solution of \eqref{1.1}.
\end{theorem}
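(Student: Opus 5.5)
We follow the constrained min-max reduction of Buffoni--Esteban--S\'er\'e and Nolasco. We split $E=H^{1/2}(\mathbb{R}^3;\mathbb{C}^4)=E^+\oplus E^-$ according to the positive and negative spectral subspaces of $H_0=-i\alpha\cdot\nabla+m\beta$, with $\|u\|^2=(|H_0|u,u)_{L^2}$. The energy functional is
\[
I(u)=\frac12\big((H_0u^+,u^+)-(|H_0|u^-,u^-)\big)-\frac1p\int_{\mathbb{R}^3}|x|^{-b}|u|^p\,dx .
\]

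\textbf{Step 1 (the weighted term is well defined and compact).} Using the weighted Sobolev/Hardy-type inequality of Cotsiolis--Tavoularis and H\"older, we show that $\int|x|^{-b}|u|^p\le C\|u\|_{L^2}^{\theta p}\|u\|^{(1-\theta)p}$ for $p<3-b$. Here $\theta$ is determined by scaling, and we need $3(p-2)/2+b<1$ relative to $H^{1/2}$, which is exactly $p<3-b$. We then prove that $E\hookrightarrow L^p(|x|^{-b}dx)$ is compact. Near the origin, $|x|^{-b}$ is integrable to a power slightly above what is needed, because $p$ is strictly subcritical, so local Rellich compactness applies. On $\{|x|>R\}$ the weight is at most $R^{-b}$, and $\|u\|_{L^p}$ is bounded, so this tail vanishes uniformly as $R\to\infty$. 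This is the key replacement for concentration-compactness.

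\textbf{Step 2 (reduction in the negative direction).} For $w\in E^+$ with $\|w\|_2^2<a$ in a small ball $\{\|w\|\le R_0\}$, we maximize $I(w+v)$ over $v\in E^-$ subject to $\|w+v\|_2^2=a$. Equivalently, we rescale $w+v$ to the sphere. Since $-I$ is strictly convex in $v$ near $0$ for small $a$, the maximizer $h(w)$ is unique, $C^1$, and satisfies $\|h(w)\|\lesssim \|w\|^{p-1}$. This gives a reduced functional $J(w)=I(w+h(w))$ on $\{w\in E^+:\|w+h(w)\|_2^2=a\}$, in the manner of Chen--Ding--Guo--Yu.

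\textbf{Step 3 (minimization).} We minimize $J$ over the constraint intersected with a small ball $B_\rho\subset E^+$, where $\rho$ is chosen depending on $a$, with $\rho^2\sim K a$. We compare the infimum with the level $ma/2$. First, $J(w)\ge \frac m2 a - Ca^{\theta p/2}\rho^{(1-\theta)p}$. Second, a test function $w_\lambda=\lambda^{3/2}\varphi(\lambda x)$, projected to $E^+$ and with small $\lambda$, gives $J<\frac m2 a$ strictly, since the weighted term dominates the kinetic excess $O(\lambda^2 a)$ as $\lambda\to0$. This requires $p<3-b$ and $b<1$ together with $a$ small. On the boundary $\|w\|=\rho$ we have $J$ near $\frac12\rho^2>\frac m2 a+\delta$, so the infimum is attained strictly inside. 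A minimizing sequence is bounded in $E$, and by Step 1 the nonlinear term converges. Using the strict inequality $\inf J<\frac m2 a$, which prevents the $L^2$ mass from vanishing through the quadratic part, we obtain strong convergence and a minimizer. By the Lagrange multiplier rule, $u=w+h(w)$ solves \eqref{1.1} for some $\mu$.

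\textbf{Step 4 (location of $\mu$).} Testing the equation with $u$ gives
\[
\mu a=\|u^+\|^2-\|u^-\|^2-\int|x|^{-b}|u|^p .
\]
Hence $\mu a=2J-(1-2/p)\int|x|^{-b}|u|^p<ma$, so $\mu<m$. Also $\mu>0$, since $\|u^+\|^2\ge m\|u^+\|_2^2\approx ma$, while the other terms are $o(a)$ by the estimates of Steps 2 and 3.

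The main obstacle is Step 3 in the mass-supercritical range, where $J$ is unbounded below globally. The localization radius $\rho(a)$ must be balanced so that the boundary energy exceeds the interior test value while the reduction map remains contractive. We would first try $\rho^2=2a$ and check the exponents using the Gagliardo--Nirenberg bound from Step 1.
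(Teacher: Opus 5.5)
\textbf{The gap: your reduction does not carry the Lagrange multiplier.} What Step~2 actually builds is the unconstrained reduction $dI(w+h(w))[\xi]=0$ for all $\xi\in E^-$. That is what uniqueness from concavity of $v\mapsto I(w+v)$ near $0$, the bound $\|h(w)\|\lesssim\|w\|^{p-1}$, and $J(w)=I(w+h(w))$ on $\mathcal{M}=\{w\in E^+:\|w+h(w)\|_{L^2}^2=a\}$ all describe. (Read literally, with $w$ fixed and $v$ on the sphere $\|v\|_{L^2}^2=a-\|w\|_{L^2}^2$ in $E^-$, neither the concavity argument nor that bound applies, since $0$ is not admissible and $h(w)$ is not small.)

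For the unconstrained reduction, the step ``by the Lagrange multiplier rule, $u=w+h(w)$ solves \eqref{1.1}'' is false. Since $h'(w)k\in E^-$, one has $dJ(w)[k]=dI(u)[k]$. By $L^2$-orthogonality of $E^\pm$, a constrained critical point of $J$ on $\mathcal{M}$ gives, in your normalization,
\[
dI(u)[k]=\mu\,\Re(u,k)_{L^2}+\mu\,\Re\bigl(h(w),h'(w)k\bigr)_{L^2}\quad(k\in E^+),\qquad dI(u)[\xi]=0\quad(\xi\in E^-).
\]
But \eqref{1.1} requires $dI(u)[\xi]=\mu\Re(h(w),\xi)_{L^2}$ on $E^-$ and no extra term on $E^+$. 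With $\mu\in(0,m)$ this forces $h(w)=0$, i.e.\ $P^-(|x|^{-b}|w|^{p-2}w)=0$, which fails in general. The multiplier term $\mu u^-$ in the negative directions is invisible to an unconstrained reduction.

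The paper builds the multiplier into the reduction. It fixes only a direction $v\in S_a\cap\mathcal{V}_\chi^+$ and maximizes $I$ over $S_{a,W}=\{u\in S_a:u^+\in\operatorname{span}\{v\}\}$. The maximizer $\phi(v)=\tau v+\eta$ then satisfies $dI(\phi(v))[h]=2\mu\Re(\phi(v),h)_{L^2}$ for all $h\in W\oplus E^-$ with a single $\mu$ (the paper's $I$ is twice yours). Moreover $d\Phi_E(v)[k]=\tau\bigl(dI(\phi(v))[k]-2\mu\Re(\phi(v),k)_{L^2}\bigr)$ for $k\in E^+$ with $\Re(v,k)_{L^2}=0$, so criticality of $\Phi_E$ supplies the $E^+$-directions with the same $\mu$.

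Your remark ``equivalently, rescale $w+v$ to the sphere'' points at this, but it is not equivalent to what you then write, and it changes what must be proved. Concavity near $0$ is not enough. You need:
\begin{itemize}
\item compactness of Palais--Smale sequences on $S_{a,W}$, using the compact weighted embedding and $\mu>0$ at levels $\ge ma/2$ (Lemma~\ref{Lemma3.1});
\item negativity of the constrained Hessian $d^2I(u)[h,h]-2\mu(u)\|h\|_{L^2}^2$ on $T_uS_{a,W}$ (Lemma~\ref{Lemma3.2});
\item a mountain-pass argument excluding two maximizers, plus the implicit function theorem for $C^1$ dependence (Lemma~\ref{Lemma3.4}).
\end{itemize}

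\textbf{Three smaller points.}
\begin{itemize}
\item In Step~1, $\tfrac32(p-2)+b<1$ means $p<\tfrac{8-2b}{3}$. The condition equivalent to $p<3-b$ is $\tfrac32(p-2)+b<\tfrac p2$.
\item $\rho^2=2a$ fails for $m\ge2$, since $\|w\|^2\ge m\|w\|_{L^2}^2\approx ma$. You need $\rho^2=Ka$ with $K>m$; the paper uses $\chi=2(m+1)a$ and the annulus $\tfrac34\chi\le\|v\|_E^2<\chi$.
\item ``The strict inequality prevents mass from vanishing'' is made rigorous via an Ekeland Palais--Smale sequence whose multipliers converge to some $\mu\in(0,m)$, using $e_a<ma$. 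Testing with $u_n^\pm-u^\pm$ then gives strong convergence: coercivity of $\|\cdot\|_E^2-\mu\|\cdot\|_{L^2}^2$ (from $\mu<m$) handles $E^+$, and $\mu>0$ handles $E^-$.
\end{itemize}
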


The proof of Theorem~\ref{Th1} builds on the variational framework of \cite{Chen-2024,CotiZelati-2025} (see also \cite{Nolasco-2021}). To accommodate the singular weight
$|x|^{-b}$, we establish a compact embedding into the weighted space $L^p(\mathbb{R}^3,|x|^{-b}\,\mathrm{d}x;\mathbb{C}^4)$, following the idea of \cite{Saanouni-2024}.
A key distinction between our strategy and that of \cite{Chen-2024} lies in how compactness is recovered.
In that work, compactness relies on taking a non-relativistic limit $c\to +\infty$, which widens the spectral gap and suppresses the negative energy component. Here we work with $c$ fixed (the full relativistic setting) and instead exploit the smallness of the prescribed mass $a$. This effectively weakens the nonlinear coupling enough to control the interaction between the positive and negative spectral subspaces without resorting the large-$c$ limit. Furthermore, the singular weight's dual nature is handled locally: the weighted embedding (Lemma~\ref{Lemma2.1}) absorbs the singularity at the origin, while the decay of $|x|^{-b}$ at infinity provides the compactness needed at large distances. In addition, unlike \cite{Chen-2024, CotiZelati-2025} which focus on the mass-subcritical nonlinearities, our framework yields a unified existence theory spanning the mass-subcritical to mass-supercritical range.

The paper is organized as follows. Section \ref{Preliminary results} introduces the functional-analytic framework, reviews the spectral decomposition of the free Dirac operator, and establishes a compact embedding into the weighted space $L^p(\mathbb{R}^3,|x|^{-b}dx;\mathbb{C}^4)$. Section \ref{existence} is devoted to the min-max reduction and the analysis of the reduced functional on the positive spectral subspace; together these yield the proof of Theorem \ref{Th1}.

{\bf Notations.}
\begin{itemize}
    \item For $R>0$, $B_R$ stands for the open ball of radius $R$ centered at the origin.
    \item The symbol $\|\cdot\|_{L^p}$ denotes the standard Lebesgue norm on $L^p(\mathbb{R}^3,\mathbb{C}^4)$ or $L^p(\mathbb{R}^3,\mathbb{C}^2)$.
    \item The expressions $\alpha\cdot\nabla$ and $\sigma\cdot\nabla$ abbreviate the sums $\sum\limits_{k=1}^3\alpha_k\partial_k$ and $\sum\limits_{k=1}^3\sigma_k\partial_k$, respectively.
    \item For a complex-valued expression, $\Re$ extracts its real part.
    \item $u\cdot v$ denotes the scalar product in $\C^4$ of $u$ and $v$, i.e., $u\cdot v=\sum\limits_{i=1}^4u_i\bar{v_i}$.
\end{itemize}

\section{Preliminary results}\label{Preliminary results}

We denote the Fourier transform of $u$ by $\mathcal{F}(u)$ or $\hat{u}$, defined by
\[
\hat{u}(\xi) = \frac{1}{(2\pi)^{\frac32}}\int_{\mathbb{R}^3} e^{-i\xi\cdot x}u(x) dx.
\]
The fractional Sobolev space $H^{\frac12}(\mathbb{R}^3,\mathbb{C}^4)$ is equipped with the inner product
\[
(u,v)_{H^{\frac12}} := \Re \int_{\mathbb{R}^3} \sqrt{|\xi|^2+1} \bigl(\hat{u}(\xi),\hat{v}(\xi)\bigr)_{\mathbb{C}^4} d\xi.
\]
We consider the free Dirac operator
\[
H_0 := -i\sum_{k=1}^{3}\alpha_k\partial_k + m\beta,
\]
which is self-adjoint on $L^2(\mathbb{R}^3,\mathbb{C}^4)$ with domain $\mathcal{D}(H_0) = H^1(\mathbb{R}^3,\mathbb{C}^4)$. Its spectrum is purely continuous:
\[
\sigma(H_0) = (-\infty, -m] \cup [m, +\infty).
\]
Consequently, the Hilbert space $L^2(\mathbb{R}^3,\mathbb{C}^4)$ admits the orthogonal decomposition
\[
L^2(\mathbb{R}^3,\mathbb{C}^4) = L^- \oplus L^+,
\]
where $H_0$ is positive definite on $L^+$ and negative definite on $L^-$.

Define the energy space $E$ as the completion of $\mathcal{D}(|H_0|^{\frac12})$ with respect to the inner product
\[
(u_1,u_2)_E := \bigl(|H_0|^{\frac12}u_1, |H_0|^{\frac12}u_2\bigr)_{L^2}.
\]
The corresponding norm is $\|u\|_E := (u,u)_E^{\frac12}$.  It is straightforward to verify that $E$ coincides with $H^{\frac12}(\mathbb{R}^3,\mathbb{C}^4)$, and there holds
\[
m\|u\|_{L^2}^2 \le \|u\|_E^2, \quad
\min\{m,1\} \|u\|_{H^{\frac12}}^2 \le \|u\|_E^2 \le \max\{m,1\} \|u\|_{H^{\frac12}}^2.
\]

The space $E$ inherits the orthogonal decomposition
\[
E = E^- \oplus E^+, \quad \text{where~~} E^\pm := E \cap L^\pm.
\]
Let $P^\pm := \frac{1}{2}\bigl(I \pm \frac{H_0}{|H_0|}\bigr)$ denote the orthogonal projections onto $E^\pm$. Their Fourier representations are essential for the subsequent analysis. In the Fourier domain, the symbol of $H_0$ is the Hermitian $4\times 4$ matrix
\[
\widehat{H}_0(\xi) = \begin{pmatrix}
m I_2 & \sigma\cdot\xi \\[2pt]
\sigma\cdot\xi & -m I_2
\end{pmatrix},
\]
whose eigenvalues are $\pm\lambda(\xi)$ with $\lambda(\xi) = \sqrt{m^2 + |\xi|^2}$. A unitary matrix that diagonalizes $\widehat{H}_0(\xi)$ is given explicitly by
\[
\mathbf{U}(\xi) = \frac{\bigl(m + \lambda(\xi)\bigr)I_4 + \beta\alpha\cdot\xi}{\sqrt{2\lambda(\xi)\bigl(m + \lambda(\xi)\bigr)}}
= \rho_+ I_4 + \rho_- \beta \frac{\alpha\cdot\xi}{|\xi|},
\]
where $\rho_\pm := \sqrt{\frac{1}{2}\bigl(1 \pm \frac{m}{\lambda(\xi)}\bigr)}$. Its inverse takes the form
\[
\mathbf{U}^{-1}(\xi) = \frac{\bigl(m + \lambda(\xi)\bigr)I_4 - \beta\alpha\cdot\xi}{\sqrt{2\lambda(\xi)\bigl(m + \lambda(\xi)\bigr)}}
= \rho_+ I_4 - \rho_- \beta \frac{\alpha\cdot\xi}{|\xi|},
\]
and one verifies the diagonalization
\[
\mathbf{U}(\xi)\widehat{H}_0(\xi)\mathbf{U}^{-1}(\xi) = \lambda(\xi)\beta.
\]
Consequently, the Fourier representation of the projections $P^\pm$ reads
\[
\widehat{P^\pm u}(\xi) = \frac{1}{2}\mathbf{U}^{-1}(\xi)(I_4 \pm \beta)\mathbf{U}(\xi)\hat{u}(\xi)
= \frac{1}{2}\Bigl(I_4 \pm \frac{m}{\lambda(\xi)}\beta \pm \frac{1}{\lambda(\xi)}\alpha\cdot\xi\Bigr)\hat{u}(\xi).
\]
Finally, the Foldy-Wouthuysen transformation $\mathbf{U}_{\mathrm{FW}} := \mathcal{F}^{-1}\mathbf{U}\mathcal{F}$ block-diagonalizes the free Dirac operator:
\[
\mathbf{U}_{\mathrm{FW}}\,H_0\,\mathbf{U}_{\mathrm{FW}}^{-1}
= \begin{pmatrix}
\sqrt{-\Delta + m^2}\,I_2 & 0 \\[4pt]
0 & -\sqrt{-\Delta + m^2}\,I_2
\end{pmatrix}
= \beta|H_0|.
\]
This transformation serves as a key tool in the subsequent reduction arguments.

%\begin{lemma}
%Let $0<b<1$ and $2<q<3-b$. Then there exists a constant $C>0$ such that for every $u\in H^{\frac12}(\mathbb{R}^3)$,
%\[
%\int_{\mathbb{R}^3}|x|^{-b}|u|^q\,dx \;\le\; C\; \|(-\Delta)^{\frac{1}{4}}u\|_2^{\,3(q-2)+2b}\; \|u\|_2^{\,6-2q-2b}.
%\]
%\end{lemma}
%
%\begin{proof}
%We start from the sharp inequality of [Saanouni and Shi  Theorem 2.1]. With $N=3$, $\gamma=1/2$, $\rho=-b$ and $p=q-1$, it yields
%\[
%\int_{\mathbb{R}^3}|x|^{-b}|u|^q\,dx \;\le\; \frac{2}{q}\|\phi\|_{p_c}^{\,q-2}\; \|u\|_{p_c}^{\,q-2}\; \|(-\Delta)^{\frac{1}{4}}u\|_2^2,
%\]
%where $p_c = \dfrac{3(q-2)}{1-b}$ and $\phi$ is a ground state solution of
%\[
%(-\Delta)^{\frac{1}{2}}\phi + |\phi|^{p_c-2}\phi = |x|^{-b}|\phi|^{q-2}\phi \quad\text{in }\mathbb{R}^3.
%\]
%
%To eliminate the $\|u\|_{p_c}$ factor, we employ the fractional Sobolev inequality [Ding, CV, Lemma2.3] with $N=3$, $p=2$, $s=\frac{1}{2}$, $p_1=p_c$):
%\[
%\|u\|_{p_c} \;\le\; C_{\mathrm{Sob}}\; \|(-\Delta)^{\frac{1}{4}}u\|_2^{\theta}\; \|u\|_2^{1-\theta},
%\]
%with $\theta = 3- \frac{6}{p_c}$. Substituting into the previous estimate gives
%\[
%\int_{\mathbb{R}^3}|x|^{-b}|u|^q\,dx \;\le\; \frac{2}{q}\|\phi\|_{p_c}^{\,q-2} C_{\mathrm{Sob}}^{\,q-2}\; \|(-\Delta)^{\frac{1}{4}}u\|_2^{2+(q-2)\theta}\; \|u\|_2^{(q-2)(1-\theta)}.
%\]
%A direct computation shows
%\[
%2+(q-2)\theta = 3(q-2)+2b,\qquad (q-2)(1-\theta)=6-2q-2b.
%\]
%Thus the inequality holds with $C = \frac{2}{q}\|\phi\|_{p_c}^{\,q-2} C_{\mathrm{Sob}}^{\,q-2}$, which depends only on $q$ and $b$.
%\end{proof}

The following embedding restores the compactness lost due to the singular weight $|x|^{-b}$.
\begin{lemma}\label{Lemma2.1}
Let $0<b<1$ and $2<p<3-b$. Then the embedding
\[
H^{\frac12}(\mathbb{R}^3;\C^4)\hookrightarrow L^p(\mathbb{R}^3,|x|^{-b}dx;\C^4)
\]
is compact.
\end{lemma}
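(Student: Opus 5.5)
We will prove compactness by splitting $\mathbb{R}^3$ into a small ball around the origin, an annulus, and the exterior of a large ball, and showing that the weighted $L^p$ norm of a bounded sequence is uniformly small on the first and third regions and convergent on the middle one. Let $(u_n)$ be bounded in $H^{\frac12}(\mathbb{R}^3;\C^4)$. Passing to a subsequence, $u_n\rightharpoonup u$ weakly in $H^{\frac12}$. By the Rellich-type compactness of $H^{\frac12}_{\mathrm{loc}}\hookrightarrow L^q_{\mathrm{loc}}$ for $q\in[1,3)$, we may also assume $u_n\to u$ in $L^q(B_R)$ for every $R>0$ and every $q<3$. Set $v_n=u_n-u$, which is bounded in $H^{\frac12}$ and tends to $0$ in $L^q_{\mathrm{loc}}$. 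The goal is $\int_{\mathbb{R}^3}|x|^{-b}|v_n|^p\,dx\to0$.

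\textbf{Near the origin.} Fix $\varepsilon\in(0,1)$ and apply H\"older with exponents $s,s'$ on $B_\varepsilon$:
\[
\int_{B_\varepsilon}|x|^{-b}|v_n|^p\,dx\le \bigl\||x|^{-b}\bigr\|_{L^{s}(B_\varepsilon)}\|v_n\|_{L^{ps'}}^{p}.
\]
We need $bs<3$, so that the first factor is finite and equals $C\varepsilon^{3/s-b}\to0$, and $ps'\le 3$, so that the Sobolev embedding $H^{\frac12}\hookrightarrow L^3$ bounds the second factor uniformly. The constraint $ps'\le3$ means $1/s\le 1-p/3$, and $bs<3$ means $1/s>b/3$. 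Both can be met exactly when $b/3<1-p/3$, i.e.\ $p<3-b$, which is the hypothesis. This choice of exponents is the crux of the argument and the only place where $p<3-b$ enters. It gives $\sup_n\int_{B_\varepsilon}\le C\varepsilon^{\delta}$ for some $\delta>0$.

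\textbf{Away from the origin.} On $\{|x|>R\}$ the bound $|x|^{-b}\le R^{-b}$ gives $\int_{|x|>R}|x|^{-b}|v_n|^p\le R^{-b}\|v_n\|_{L^p}^p\le CR^{-b}$, since $2<p<3$ and $H^{\frac12}\hookrightarrow L^p$. This uses the decay of the weight, which is what replaces the missing translation invariance.

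\textbf{Middle region.} On the annulus $\varepsilon\le|x|\le R$ the weight is bounded by $\varepsilon^{-b}$, and $v_n\to0$ in $L^p(B_R)$ because $p<3$. So this contribution tends to $0$ for fixed $\varepsilon$ and $R$.

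\textbf{Conclusion.} Combining the three regions,
\[
\limsup_n\int|x|^{-b}|v_n|^p\le C(\varepsilon^\delta+R^{-b}),
\]
and letting $\varepsilon\to0$, $R\to\infty$ gives the claim. The same H\"older estimate also shows the embedding is continuous, so the weighted norm is well defined on $H^{\frac12}$.

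The main obstacle is the local singular piece. The critical endpoint $p=3-b$ fails there, so the exponent bookkeeping must be done carefully, including the strict inequality $bs<3$.
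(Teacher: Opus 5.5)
Your proposal is correct and follows essentially the same route as the paper. Both arguments split into three regions: the decay $|x|^{-b}\le R^{-b}$ outside a large ball, local Rellich compactness on the annulus, and H\"older near the origin with an exponent chosen in the window that is nonempty precisely because $p<3-b$. The only minor difference is the inner-ball term. You make it uniformly small via $\||x|^{-b}\|_{L^{s}(B_\varepsilon)}\to 0$ together with a uniform $L^{ps'}$ bound, so the endpoint $ps'=3$ is allowed. The paper instead fixes the inner radius and uses strong local convergence in $L^{p\theta'}(B_\delta)$ with $p\theta'<3$ strictly.
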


\begin{proof}
We first prove the continuity of the embedding. For any $u \in H^{\frac{1}{2}}(\R^3;\C^4)$ and $p \in (2,3-b)$, applying the H\"{o}lder inequality together with the fractional Sobolev inequality $\|u\|_p \le \mathcal{S}_p \|u\|_{H^{\frac{1}{2}}}$ yields
\begin{align*}
\int_{\mathbb{R}^3}|x|^{-b}|u|^p dx
&= \int_{|x|\le1}|x|^{-b}|u|^p dx + \int_{|x|>1}|x|^{-b}|u|^p dx \\
&\le \Bigl(\int_{|x|\le1}|x|^{-\frac{3b}{3-p}} dx\Bigr)^{\frac{3-p}{3}}\Bigl(\int_{|x|\le1}|u|^3 dx\Bigr)^{\frac{p}{3}} + \mathcal{S}_p^p \|u\|_{H^{\frac{1}{2}}}^p \\
&\le \Bigl(\int_{|x|\le1}|x|^{-\frac{3b}{3-p}} dx\Bigr)^{\frac{3-p}{3}}\mathcal{S}_3^p \|u\|_{H^{\frac{1}{2}}}^p + \mathcal{S}_p^p \|u\|_{H^{\frac{1}{2}}}^p.
\end{align*}
Since $b \in (0,1)$ and $p \in (2,3-b)$, we have $\frac{3b}{3-p} < 3$, which guarantees the finiteness of the integral over the unit ball.  Denoting this bound by $C_1$, we obtain the global estimate
\begin{align*}
\|u\|_{L^p(|x|^{-b})}^p \le \mathcal{S}\|u\|_{H^{\frac{1}{2}}}^p~~\mbox{with}~~ \mathcal{S} = C_1\mathcal{S}_3^p + \mathcal{S}_p^p,
\end{align*}
 which completes the proof of continuity.

To prove compactness, let $\{u_n\} \subset H^{\frac{1}{2}}(\mathbb{R}^3;\C^4)$ be a bounded sequence. By the Rellich-Kondrachov theorem, the embedding $H^{\frac{1}{2}}(B_R) \hookrightarrow L^p(B_R)$ is compact for any $R>0$. A standard diagonal argument yields a subsequence, still denoted by $\{u_n\}$, that converges to some $u$ in $L^p(B_R)$ for all $R>0$. Set $v_n := u_n - u$. Our goal is to show that $v_n \to 0$ strongly in $L^p(\R^3, |x|^{-b}dx;\C^4)$. Since $\{u_n\}$ is bounded in $H^{\frac{1}{2}}$, we have $\|v_n\|_{H^{\frac{1}{2}}} \le 2M$ for some $M>0$ and $\|v_n\|_{L^p(B_R)} \to 0$ for each $R>0$. Furthermore, the fractional Sobolev inequality gives the uniform bound $\|v_n\|_p \le 2\mathcal{S}_p M$.

Fix an arbitrary $\varepsilon > 0$. We first choose $R_0$ large enough so that $(2\mathcal{S}_p M)^p R_0^{-b} < \frac{\varepsilon}{3}$, and then select $\delta = \frac12 \min\{1, R_0\}$. For every $n$, we split the weighted integral into three regions:
\begin{equation}\label{2.1}
\int_{\mathbb{R}^3} |x|^{-b} |v_n|^p  dx = \int_{|x|>R_0} |x|^{-b} |v_n|^p  dx + \int_{B_{R_0}\setminus B_\delta} |x|^{-b} |v_n|^p dx + \int_{B_\delta} |x|^{-b} |v_n|^p dx.
\end{equation}
The contribution outside the large ball is controlled directly by the choice of $R_0$:
\[
\int_{|x|>R_0} |x|^{-b} |v_n|^p dx \le R_0^{-b} \|v_n\|_p^p \le R_0^{-b} (2\mathcal{S}_p M)^p < \frac{\varepsilon}{3}.
\]
For the annular region $B_{R_0} \setminus B_\delta$, we use the bound $|x|^{-b} \le \delta^{-b}$. Since $v_n \to 0$ in $L^p(B_{R_0})$, there exists an integer $N_1$ such that for all $n \ge N_1$,
\[
\int_{B_{R_0}\setminus B_\delta} |x|^{-b} |v_n|^p dx \le \delta^{-b} \|v_n\|_{L^p(B_{R_0})}^p < \frac{\varepsilon}{3}.
\]
It remains to estimate the integral over the small ball $B_\delta$. Observe that the condition $p < 3-b$ is equivalent to $\frac{3}{3-p} < \frac{3}{b}$. Hence we can select an auxiliary exponent $\theta$ satisfying $\frac{3}{3-p} < \theta < \frac{3}{b}$. Let $\theta' = \frac{\theta}{\theta-1}$ be its H\"older conjugate. The choice of $\theta$ guarantees that $b\theta < 3$ and $2<p\theta' < 3$. Applying H\"older's inequality on $B_\delta$ yields
\[
\int_{B_\delta} |x|^{-b} |v_n|^p dx \le \Bigl(\int_{B_\delta} |x|^{-b\theta} dx\Bigr)^{\frac{1}{\theta}} \Bigl(\int_{B_\delta} |v_n|^{p\theta'} dx\Bigr)^{\frac{1}{\theta'}}.
\]
Define $C_\delta := \bigl(\int_{B_\delta} |x|^{-b\theta} dx\bigr)^{\frac{1}{\theta}}$. Because $b\theta < 3$, we have $C_\delta \to 0$ as $\delta \to 0$. Since $p\theta' \in (2,3)$, the compact embedding $H^{\frac{1}{2}}(B_\delta) \hookrightarrow L^{p\theta'}(B_\delta)$ implies that $\|v_n\|_{L^{p\theta'}(B_\delta)} \to 0$. Thus, for the fixed $\delta$ chosen above, there exists $N_2$ such that for all $n \ge N_2$,
\[
\Bigl(\int_{B_\delta} |v_n|^{p\theta'} dx\Bigr)^{\frac{1}{\theta'}} < \frac{\varepsilon}{3 C_\delta},
\]
which directly gives $\int_{B_\delta} |x|^{-b} |v_n|^p dx < \frac{\varepsilon}{3}$.

Collecting the estimates for the three regions in \eqref{2.1}, we conclude that for all $n \ge \max\{N_1, N_2\}$,
\[
\int_{\mathbb{R}^3} |x|^{-b} |v_n|^p dx < \varepsilon.
\]
Since $\varepsilon > 0$ is arbitrary, this proves $\|v_n\|_{L^p(|x|^{-b})} \to 0$. Consequently, a subsequence of $\{u_n\}$ converges strongly in $L^p(\R^3, |x|^{-b}dx;\C^4)$, thereby establishing the compactness of the embedding.
\end{proof}

\section{Proof of Theorem \ref{Th1}}\label{existence}
Throughout this section, we identify $E$ with $H^{\frac{1}{2}}(\R^3;\C^4)$ and recall the norm equivalence
$$
\mathcal{C}_1\|u\|_{H^{\frac{1}{2}}} \le \|u\|_E \le \mathcal{C}_2\|u\|_{H^{\frac{1}{2}}}, \quad \forall u \in E.
$$
We introduce the energy functional $I : E \to \R$ associated with \eqref{1.1}:
\begin{equation*}
I(u) = \|u^+\|_E^2 - \|u^-\|_E^2 - \frac{2}{p} \int_{\R^3} |x|^{-b} |u|^p dx.
\end{equation*}
It is clear that (weak) normalized solutions of \eqref{1.1} correspond to critical points of $I$ restricted to the $L^2$-sphere
\[
S_a := \bigl\{ u \in E : \|u\|_{L^2}^2 = a \bigr\}.
\]
Set
\[
\chi := 2(m+1)a,\quad
\mathcal{V}_\chi := \{u\in E : \|u\|_E^2 < \chi\},\quad
\mathcal{V}_\chi^+ := \mathcal{V}_\chi\cap E^+.
\]
We now employ a reduction method to study the normalized solutions of \eqref{1.1}.

\subsection{Maximization problem}

For any $v \in S_a\cap\mathcal{V}_\chi^+$ with $\|v\|_{L^2}^2 = a$, let $W = \operatorname{span}\{v\}$ and consider
\[
S_{a,W} := \{ u \in S_a : u^+ \in W \}.
\]
We aim to maximize the functional $I$ on $S_{a,W}$. The tangent space at  $u \in S_{a,W}$ is given by
\[
T_u(S_{a,W}) = \{ h \in W \oplus E^- : \Re(u,h)_{L^2} = 0 \}.
\]
We denote by $dI|_{S_{a,W}}(u)$ the projection of $dI(u)$ onto $T_u(S_{a,W})$, defined by
\[
dI|_{S_{a,W}}(u)[h] = dI(u)[h] - 2\mu(u)\Re(u,h)_{L^2}, \quad \forall  h \in W \oplus E^-,
\]
where $\mu(u) \in \R$ is chosen so that $dI|_{S_{a,W}}(u) \in T_u(S_{a,W})$.

For a fixed $v \in S_a\cap\mathcal{V}_\chi^+$, we say that a sequence $\{u_n\} \subset S_{a,W}$ is a Palais-Smale sequence for $I$ on $S_{a,W}$ at level $e_a$ if
\[
I(u_n) \to e_a \quad \text{and} \quad \|dI|_{S_{a,W}}(u_n)\| \to 0, \quad \text{as } n \to \infty.
\]

Set
$$
a_1:=\left(\frac{m\mathcal{C}_1^p }{2^{\frac{3p+6}{2}}\mathcal{S}(m+1)^{\frac p2}}\right)^{\frac{2}{p-2}}.
$$
\begin{lemma}\label{Lemma3.1}
Fix $a \in (0, a_1)$, and let $\{u_n\} \subset S_{a,W}$ be a Palais-Smale sequence for the functional
\[
I(u)=\|u^+\|_E^2-\|u^-\|_E^2-\frac{2}{p}\int_{\mathbb{R}^3}|x|^{-b}|u|^p dx
\]
on $S_{a,W}$, i.e.,
\[
I(u_n)\to\ell,\quad \|dI|_{S_{a,W}}(u_n)\|\to0,
\]
with $\ell\ge\frac{ma}{2}$. Then\\
(i) $\{u_n\}$ is bounded in $E$;\\
(ii) $\liminf\limits_{n\to\infty}\mu(u_n)>0$;\\
(iii) $\{u_n\}$ is precompact in $H^{\frac{1}{2}}(\R^3;\C^4)$.
\end{lemma}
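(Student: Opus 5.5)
The plan is to exploit the structure of $S_{a,W}$. Every $u\in S_{a,W}$ splits as $u=t v+w$ with $t\in\C$ and $w\in E^-$. Since $E^+\perp E^-$ in $L^2$, we have $|t|^2\|v\|_{L^2}^2+\|w\|_{L^2}^2=a$, so $|t|\le 1$. Write $u_n=t_nv+w_n$ accordingly.

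\emph{Step (i): boundedness.} We have $\|u_n^+\|_E^2=|t_n|^2\|v\|_E^2<\chi$ because $v\in\mathcal{V}_\chi^+$. Dropping the nonpositive nonlinear term gives
\[
I(u_n)\le\|u_n^+\|_E^2-\|w_n\|_E^2 .
\]
Since $I(u_n)\to\ell\ge\frac{ma}{2}>0$, for $n$ large this yields $\|w_n\|_E^2\le\|u_n^+\|_E^2<\chi$. Hence $\|u_n\|_E^2<2\chi=4(m+1)a$, which proves (i), and the bound is explicit in $a$; this explicit form is needed in (ii).

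\emph{Step (ii): positivity of the multiplier.} Since $u_n\in W\oplus E^-$, the vector $h=u_n$ is an admissible test direction. Its norm is bounded by (i), so $dI|_{S_{a,W}}(u_n)[u_n]=o(1)$. We have $\Re(u_n,u_n)_{L^2}=a$ and
\[
dI(u_n)[u_n]=2I(u_n)-2\Bigl(1-\frac2p\Bigr)\int|x|^{-b}|u_n|^p\,dx .
\]
Therefore
\[
\mu(u_n)\,a=I(u_n)-\Bigl(1-\frac2p\Bigr)\int|x|^{-b}|u_n|^p\,dx+o(1).
\]
The continuity part of Lemma~\ref{Lemma2.1} and the norm equivalence give
\[
\int|x|^{-b}|u_n|^p\,dx\le\mathcal{S}\,\mathcal{C}_1^{-p}\|u_n\|_E^p\le\mathcal{S}\,\mathcal{C}_1^{-p}\bigl(4(m+1)a\bigr)^{p/2}.
\]
This is $O(a^{p/2})$ with $p/2>1$. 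The definition of $a_1$ is designed so that for $a<a_1$ this term is at most $\frac{ma}{4}$. I expect checking the precise constants, namely the powers of $2$ in $a_1$, to be the only delicate point. The result is $\liminf\mu(u_n)\ge\frac m4>0$. The same identity also shows that $\{\mu(u_n)\}$ is bounded above.

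\emph{Step (iii): compactness.} Pass to a subsequence with $t_n\to t$, which is possible in the one-dimensional space $W$, and $w_n\rightharpoonup w$ weakly in $E^-$. By Lemma~\ref{Lemma2.1}, $u_n\to u:=tv+w$ strongly in $L^p(\R^3,|x|^{-b}dx)$. Test the constrained derivative with the bounded direction $h_n=w_n-w\in E^-$. Using the $L^2$-orthogonality of $E^+$ and $E^-$, we have $\Re(u_n,h_n)_{L^2}=\Re(w_n,h_n)_{L^2}$. This gives
\[
-2\Re(w_n,w_n-w)_E-2\mu(u_n)\Re(w_n,w_n-w)_{L^2}-2\Re\int|x|^{-b}|u_n|^{p-2}u_n\cdot(w_n-w)\,dx=o(1).
\]
For the nonlinear term, apply H\"older's inequality in the weighted space with exponents $\frac{p}{p-1}$ and $p$. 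The first factor stays bounded and $\|w_n-w\|_{L^p(|x|^{-b})}\to0$, so the whole term tends to $0$.

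By weak convergence, replacing $w_n$ by $w_n-w$ in the first slot of both inner products changes them by $o(1)$. Hence
\[
\|w_n-w\|_E^2+\mu(u_n)\|w_n-w\|_{L^2}^2\to0 .
\]
Since $\mu(u_n)$ is eventually positive by (ii), both terms are nonnegative, so $\|w_n-w\|_E\to0$. Thus $u_n\to u$ in $E$, which coincides with $H^{\frac12}$.

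The main obstacle is step (ii). It is there that smallness of $a$ enters, and the positivity of $\mu$ obtained there is exactly what makes the quadratic form in (iii) coercive on $E^-$.
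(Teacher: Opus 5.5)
Your proof is correct and follows essentially the same route as the paper: the splitting $u_n=t_nv+u_n^-$ with $|t_n|\le 1$ for boundedness, then testing with $h=u_n$ and bounding the weighted term by $\mathcal{S}\mathcal{C}_1^{-p}\|u_n\|_E^p$ with $\|u_n\|_E^2<2\chi$ to get $\liminf\mu(u_n)>0$, then testing with $u_n^--u^-$ and using the compact embedding of Lemma~\ref{Lemma2.1} for strong convergence. Your version is slightly more careful than the paper's, since you note that $\{\mu(u_n)\}$ is bounded above, which is needed to discard $\mu(u_n)\Re(w,w_n-w)_{L^2}$; your unverified constant claim also holds, because for $a<a_1$ the weighted term is actually below $\frac{ma}{2^{(p+6)/2}}<\frac{ma}{16}$.
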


\begin{proof}
Write $u_n=t_nv+u_n^-$ with $v\in S_a\cap\mathcal{V}_\chi^+$ fixed and $u_n^-\in E^-$. From $\|u_n\|_{L^2}^2=a$ and $\|v\|_{L^2}^2=a$, we obtain $|t_n|^2a+\|u_n^-\|_{L^2}^2=a$, hence $|t_n|\le1$. Then we obtain that
\[
I(u_n)=\|u_n^+\|_E^2-\|u_n^-\|_E^2-\frac{2}{p}\int|x|^{-b}|u_n|^pdx
\le t_n^2\|v\|_E^2-\|u_n^-\|_E^2\le\|v\|_E^2-\|u_n^-\|_E^2.
\]
Thus $\|u_n^-\|_E^2\le\|v\|_E^2-I(u_n)$. Since $\{I(u_n)\}$ is bounded and $\{u_n^-\}$ is bounded in $E$, together with $|t_n|\le1$, we conclude that $\{u_n\}$ is bounded in $E$. This proves (i).

It follows from $\{u_n\} \subset S_{a,W}$ is a Palais-Smale sequence for $I$ on $S_{a,W}$ that there exists $\mu(u_n)\in\mathbb{R}$ such that for every $h\in W\oplus E^-$,
\[
dI(u_n)[h]=2\mu(u_n)\Re(u_n, h)_{L^2}+o_n(1).
\]
Taking $h=u_n$ (note $u_n\in W\oplus E^-$) gives
\begin{equation}\label{3.1}
\|u_n^+\|_E^2-\|u_n^-\|_E^2-\int_{\mathbb{R}^3}|x|^{-b}|u_n|^p dx=\mu(u_n)a+o_n(1).
\end{equation}
On the other hand,
\[
I(u_n)=\|u_n^+\|_E^2-\|u_n^-\|_E^2-\frac{2}{p}\int_{\mathbb{R}^3}|x|^{-b}|u_n|^p dx\to\ell.
\]
By using \eqref{3.1}, we obtain
\[
I(u_n)=\mu(u_n)a+\Bigl(1-\frac{2}{p}\Bigr)\int_{\mathbb{R}^3}|x|^{-b}|u_n|^p dx+o_n(1).
\]
Hence
\begin{align*}
\mu(u_n)a &= I(u_n)-\Bigl(1-\frac{2}{p}\Bigr)\int_{\mathbb{R}^3}|x|^{-b}|u_n|^p dx+o(1)\\
&\ge \frac{\ell}{2}-\Bigl(1-\frac{2}{p}\Bigr)\int_{\mathbb{R}^3}|x|^{-b}|u_n|^p dx+o_n(1).
\end{align*}
By the weighted Sobolev embedding $H^{\frac{1}{2}}(\mathbb{R}^3;\C^4)\hookrightarrow L^p(\mathbb{R}^3,|x|^{-b}dx;\C^4)$ and the fact  $\|u_n^-\|_E^2\leq \|u_n^+\|_E^2$, we have
$$\Bigl(1-\frac{2}{p}\Bigr)\int|x|^{-b}|u_n|^p dx\le \frac{\mathcal{S}}{\mathcal{C}_1^p}\|u_n\|_E^p\le \frac{2^{p}\mathcal{S}[2(m+1)]^{\frac{p}{2}}}{\mathcal{C}_1^p}a^{\frac{p}{2}}.$$
Consequently,
\[
\mu(u_n)a\ge\frac{\ell}{2}-\frac{2^{\frac{3p}{2}}\mathcal{S}(m+1)^{\frac{p}{2}}}{\mathcal{C}_1^p }a^{\frac p2}\ge\frac{ma}{4}-\frac{2^{\frac{3p}{2}}\mathcal{S}(m+1)^{\frac{p}{2}}}{\mathcal{C}_1^p }a^{\frac p2}>\frac{ma}{8}.
\]
Hence $\liminf\limits_{n\to\infty}\mu(u_n)>0$. This proves (ii).

To prove (iii), extract a subsequence (still denoted $\{u_n\}$) such that $u_n \rightharpoonup u$ in $E$. Since $\dim W = 1$, we have $u_n^+ \to u^+$ in $E$. Set $\varphi_n = u_n^- - u^- \in E^-$. Since $\{u_n\} \subset S_{a,W}$ be a Palais-Smale sequence for $I$ on $S_{a,W}$, then we have
\begin{equation}\label{3.2}
o(1) = -\frac12 dI(u_n)[\varphi_n] + \mu(u_n) \|\varphi_n\|_{L^2}^2.
\end{equation}
Expanding the derivative yields
\[
dI(u_n)[\varphi_n] = 2(u_n^+, 0)_E - 2(u_n^-, \varphi_n)_E - 2\int_{\mathbb{R}^3} |x|^{-b} |u_n|^{p-2} \Re(u_n, \varphi_n) dx.
\]
Since $(u_n^+, 0)_E = 0$ and $-2(u_n^-, \varphi_n)_E = -2\|\varphi_n\|_E^2 - 2(u^-, \varphi_n)_E$, substituting into \eqref{3.2} gives
\[
o(1) = \|\varphi_n\|_E^2 + (u^-, \varphi_n)_E + \int_{\mathbb{R}^3} |x|^{-b} |u_n|^{p-2} \Re(u_n, \varphi_n) dx + \mu(u_n) \|\varphi_n\|_{L^2}^2.
\]
Because $\varphi_n \rightharpoonup 0$ in $E$, we have $(u^-, \varphi_n)_E \to 0$, and $\mu(u_n)$ stays bounded away from zero. It remains to show that
\[
\int_{\mathbb{R}^3} |x|^{-b} |u_n|^{p-2} \Re(u_n ,\varphi_n) dx \to 0.
\]
By the pointwise bound $|\Re(u_n ,\varphi_n)| \le |u_n| |\varphi_n|$ and H\"older's inequality,
\[
\Bigl| \int_{\mathbb{R}^3} |x|^{-b} |u_n|^{p-2} \Re(u_n,\varphi_n) dx \Bigr|
\le \|u_n\|_{L^p(|x|^{-b})}^{p-1} \|\varphi_n\|_{L^p(|x|^{-b})}.
\]
Since the embedding $H^{\frac{1}{2}}(\R^3;\C^4) \hookrightarrow L^p(\R^3,|x|^{-b}dx;\C^4)$ is compact for $p\in(2,3-b)$, the sequence $\{u_n\}$ is bounded in $L^p(\R^3,|x|^{-b}dx;\C^4)$, and the weak convergence $\varphi_n \rightharpoonup 0$ in $H^{\frac{1}{2}}(\R^3;\C^4)$ implies $\|\varphi_n\|_{L^p(|x|^{-b})} \to 0$. Thus the integral tends to zero, yielding $\|\varphi_n\|_E \to 0$. Combined with $u_n^+ \to u^+$ in $H^{\frac{1}{2}}(\mathbb{R}^3; \mathbb{C}^4)$, this gives $u_n \to u$ strongly in $H^{\frac{1}{2}}(\mathbb{R}^3; \mathbb{C}^4)$.
\end{proof}

In order to establish the uniqueness of the maximizer, we compute the second variation of $I$ at a critical point on $S_{a,W}$.
\begin{lemma}\label{Lemma3.2}
Let $a\in(0,a_1)$, $v\in S_a\cap\mathcal{V}_\chi^+$ and $W=span\{v\}$. Suppose $u\in E$ satisfies
\[
dI(u)[h] - 2\mu(u)\Re(u,h)_{L^2} = 0 \quad \text{for all } h\in W\oplus E^-,
\]
and $I(u) \ge \frac{ma}{2}$. Then for every $h\in T_uS_{a,W}$, we have
\[
d^2I(u)[h,h] - 2\mu(u)\|h\|_{L^2}^2< 0.
\]
Whence, $u$ is a strict local maximum of $I$ on the constrained manifold $S_{a,W}$.
\end{lemma}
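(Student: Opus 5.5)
The plan is to compute $d^2I(u)$ explicitly, discard the nonlinear part by a sign argument, and then use the tangency condition together with smallness of $u^-$ to show that the positive component $h^+$ of a tangent vector is dominated by its negative component $h^-$.

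\textbf{Step 1: the second variation.} For $h\in E$,
\[
d^2I(u)[h,h]=2\|h^+\|_E^2-2\|h^-\|_E^2-2\int_{\R^3}|x|^{-b}\Bigl(|u|^{p-2}|h|^2+(p-2)|u|^{p-4}\bigl(\Re(u\cdot h)\bigr)^2\Bigr)dx .
\]
The integral is well defined by Lemma~\ref{Lemma2.1} and H\"older's inequality. Since $p>2$, the integral term is nonnegative, hence
\[
d^2I(u)[h,h]-2\mu(u)\|h\|_{L^2}^2\le 2\|h^+\|_E^2-2\|h^-\|_E^2-2\mu(u)\|h\|_{L^2}^2 .
\]

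\textbf{Step 2: properties of $u$.}

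\emph{Sign of $\mu(u)$.} Testing the criticality equation with $h=u$ and repeating the computation in Lemma~\ref{Lemma3.1}(ii) gives $\mu(u)\ge m/8>0$. This uses $I(u)\ge ma/2$, $a<a_1$, and $\|u^-\|_E^2\le\|u^+\|_E^2\le\chi$.

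\emph{Decomposition of $u$.} Write $u=tv+u^-$. From $I(u)\ge ma/2$ and $\|v\|_E^2<\chi=2(m+1)a$ we get $t^2\chi\ge t^2\|v\|_E^2\ge ma/2$, so $t^2\ge m/(4(m+1))$; in particular $t\neq0$.

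\emph{Smallness of $u^-$.} Test the criticality equation with $h=u^-\in E^-$:
\[
\|u^-\|_E^2+\mu(u)\|u^-\|_{L^2}^2=-\int_{\R^3}|x|^{-b}|u|^{p-2}\Re(u\cdot u^-)\,dx\le \mathcal{S}'\|u\|_E^{p-1}\|u^-\|_E ,
\]
where the last bound follows from Lemma~\ref{Lemma2.1} and H\"older. This yields $\|u^-\|_E\le C a^{(p-1)/2}$, and therefore $\|u^-\|_{L^2}^2\le m^{-1}\|u^-\|_E^2\le Ca^{p-1}$. This is much smaller than $a$ when $a$ is small, since $p>2$.

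\textbf{Step 3: tangency bound (the main step).} Let $h=sv+h^-\in T_uS_{a,W}$. The orthogonality $\Re(u,h)_{L^2}=0$, together with $L^+\perp L^-$ in $L^2$, gives
\[
\Re(t\bar s)\,a=-\Re(u^-,h^-)_{L^2}.
\]
Here I must handle the complex phase of $s$: $W$ is the complex span of $v$, so only $\Re(t\bar s)$ is controlled. The first thing I would try is to absorb the phase into $v$, choosing $t$ real, and then argue for the real direction. If the imaginary direction $s=i\sigma v$ with $\sigma$ real escapes this bound, I would use that $i u$ is tangent to the orbit and treat it separately; it is this direction where I expect the argument to need the most care.

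Granting control of $|s|$, we obtain
\[
|s|\le \frac{\|u^-\|_{L^2}\|h^-\|_{L^2}}{|t|\,a},
\]
and hence
\[
\|h^+\|_E^2=|s|^2\|v\|_E^2\le \frac{2(m+1)\,\|u^-\|_{L^2}^2\,\|h^-\|_E^2}{m\,t^2\,a}\le C a^{p-2}\|h^-\|_E^2 .
\]
Shrinking $a$ if necessary, so that $Ca^{p-2}\le\frac12$, this gives $\|h^+\|_E^2\le\frac12\|h^-\|_E^2$.

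\textbf{Step 4: conclusion.} Combining Steps 1 and 3,
\[
d^2I(u)[h,h]-2\mu(u)\|h\|_{L^2}^2\le -\|h^-\|_E^2-2\mu(u)\|h\|_{L^2}^2\le -c\|h\|_E^2,
\]
where the last inequality uses $\|h\|_E^2\le\frac32\|h^-\|_E^2$. The right-hand side is strictly negative for $h\ne0$: if $h^-=0$, then $s=0$ by Step 3, since $t\ne0$, so $h=0$.

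This uniform negative definiteness of the constrained Hessian on $T_uS_{a,W}$, combined with a second-order Taylor expansion of $I$ along a local chart of the smooth manifold $S_{a,W}$ (for instance $h\mapsto \sqrt a\,(u+h)/\|u+h\|_{L^2}$), shows that $u$ is a strict local maximum of $I$ on $S_{a,W}$.
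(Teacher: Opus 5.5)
Your core argument is correct, but the phase discussion in Step 3 must be settled differently. If $W$ were the complex span $\mathbb{C}v$, then $iu=itv+iu^-\in W\oplus E^-$ and $\Re(u,iu)_{L^2}=0$, so $iu\in T_uS_{a,W}$. Since $e^{i\theta}u\in S_{a,W}$ and $I(e^{i\theta}u)=I(u)$, differentiating twice at $\theta=0$ gives $d^2I(u)[iu,iu]-2\mu(u)\|iu\|_{L^2}^2=0$. So no separate treatment of $iu$ can produce the strict inequality. In the complex reading the lemma is simply false, since maximizers come in circles.

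The statement has to be read with $W=\mathbb{R}v$. This is what the paper's proof does when it writes $c=-\frac{1}{\tau a}\Re(\eta,\xi)_{L^2}$ and works with $c^2$. Then $s\in\mathbb{R}$, $\Re(t\bar s)=ts$, and your bound on $|s|$ holds with nothing granted.

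You also do not need to shrink $a$. For $a<a_1$, your Step 2 estimate $\|u^-\|_E\le\frac{\mathcal{S}}{\mathcal{C}_1^p}\|u\|_E^{p-1}$, together with $\|u\|_E^2\le 4(m+1)a$, gives $\|u^-\|_{L^2}^2<\frac{m}{2^{p+8}(m+1)}a$. If you use $t^2=1-\|u^-\|_{L^2}^2/a>\frac12$ instead of $t^2\ge\frac{m}{4(m+1)}$, you get $\|h^+\|_E^2\le 2^{-p-6}\|h^-\|_E^2$ on the whole stated range.

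With that fixed, your route genuinely differs from the paper's.

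\emph{The paper's route.} It writes $h=cv+\xi$ and substitutes $v=\tau^{-1}(u-\eta)$ into $d^2I(u)[cv,cv]$. It then uses the Euler--Lagrange equation along $cv-\tau^{-1}c\eta$ to trade the positive term $2c^2\|v\|_E^2$ for $4\mu c^2a-2c^2\|v\|_E^2$ plus remainders. The $\mu$-terms are absorbed into $2\mu\|h\|_{L^2}^2$ via the tangency relation. Every nonlinear piece, including the sign-indefinite cross term $2d^2I(u)[cv,\xi]$, is bounded by $Ca^{(p-2)/2}\|h\|_E^2$, and this is where $a<a_1$ enters.

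\emph{Your route.} You keep the Hessian whole and discard its entire nonlinear part by sign, which is legitimate for $p>2$. You then kill the single positive term $2\|h^+\|_E^2$ using an a priori smallness bound on $u^-$, obtained by testing the Euler--Lagrange equation with $u^-$, followed by the tangency estimate on $s$.

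\emph{What each buys.} Yours is shorter and avoids the bilinear bookkeeping and cross-term estimates entirely. The paper's works directly with the multiplier, does not go through an estimate on $u^-$, and yields the coercivity constant $2\bigl[1-Ca^{(p-2)/2}\bigr]$ in terms of $\|h\|_E$ in one stroke.
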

\begin{proof}
Write $u=\tau v+\eta$ with $\eta\in E^-$. From $\|u\|_{L^2}^2=a$ we have $\tau^2 a+\|\eta\|_{L^2}^2=a$, hence $\tau=\sqrt{1-\frac{1}{a}\|\eta\|_{L^2}^2}$.

For any $h\in T_uS_{a,W}$, decompose $h=cv+\xi$ with $\xi\in E^-$. The orthogonality condition $\Re(u,h)_{L^2}=0$ gives
\[
c=-\frac{1}{\tau a}\Re(\eta,\xi)_{L^2}.
\]
Using bilinearity,
\begin{equation}\label{3.3}
\begin{aligned}
d^2I(u)[h,h]&=\tau^{-1}c d^2I(u)[u,cv-\tau^{-1}c\eta] +2d^2I(u)[cv,\xi] \\
&\quad +\tau^{-2}c^2 d^2I(u)[\eta,\eta]+d^2I(u)[\xi,\xi].
\end{aligned}
\end{equation}
Recall that
\[
dI(u)[h]=2(u^+,h^+)_E-2(u^-,h^-)_E-2\int_{\R^3}|x|^{-b}|u|^{p-2}\Re(u,h) dx,
\]
and
\begin{align*}
d^2I(u)[v,h]&=2(v^+,h^+)_E-2(v^-,h^-)_E-2\int_{\R^3}|x|^{-b}|u|^{p-4}\Bigl[(p-2)\Re(u,h)\Re(u,v)\\
&\quad+|u|^2\Re(v,h)\Bigr]dx.
\end{align*}
By the H\"{o}lder's inequality, the estimate $\|u\|_{L^p(|x|^{-b})}^p \le \mathcal{S}\|u\|_{H^{\frac{1}{2}}}^p$ and the equivalence $E \cong H^{\frac{1}{2}}(\R^3;\C^4)$, we obtain that
\begin{equation}\label{3.4}
\begin{aligned}
&\tau^{-1}c d^2I(u)[u,cv-\tau^{-1}c\eta] \\
&=2\tau^{-1}c dI(u)[cv-\tau^{-1}c\eta]-2\tau^{-1}c(\tau v,cv)_E+2\tau^{-1}c(\eta,-\tau^{-1}c\eta)_E\\
&\quad-2\tau^{-1}c(p-3)\int_{\R^3}|x|^{-b}|u|^{p-2}\Re(u,cv-\tau^{-1}c\eta) dx\\
&=4\mu(u)c^2 a-4\mu(u)\tau^2c^2\|\eta\|_{L^2}^2-2c^2\|v\|_E^2-2\tau^{-2}c^2\|\eta\|_E^2\\
&\quad-2(p-3)c^2\int_{\R^3}|x|^{-b}|u|^{p-2}|v|^2dx+2(p-3)\tau^{-2}c^2\int_{\R^3}|x|^{-b}|u|^{p-2}|\eta|^2dx\\
&\le 2\mu(u)\|h\|_{L^2}^2-2c^2\|v\|_E^2+2(3-p)c^2\int_{\R^3}|x|^{-b}|u|^{p-2}|v|^2dx\\
&\le 2\mu(u)\|h\|_{L^2}^2-2c^2\|v\|_E^2+2(3-p)c^2
\left(\int_{\R^3}|x|^{-b}|u|^pdx\right)^{\frac{p-2}{p}}
\left(\int_{\R^3}|x|^{-b}|v|^pdx\right)^{\frac{2}{p}}\\
&\le 2\mu(u)\|h\|_{L^2}^2-2\|h^+\|_E^2+
\frac{2^{\frac{3p}{2}}\mathcal{S}(m+1)^{\frac{p-2}{2}}}
{\mathcal{C}_1^p}a^{\frac{p-2}{2}}\|h\|_E^2.
\end{aligned}
\end{equation}
Similarly, we have
\begin{equation}\label{3.5}
\begin{aligned}
|2d^2I(u)[cv,\xi]|&=\Bigl|-4\int_{\R^3}|x|^{-b}|u|^{p-2}\Re(cv,\xi)dx \\
&\quad-4(p-2)\int_{\R^3}|x|^{-b}|u|^{p-4}\Re(u,cv)\Re(u,\xi)dx\Bigr|\\
&\le \frac{2^{\frac{3p}{2}}\mathcal{S}(m+1)^{\frac{p-2}{2}}}
{\mathcal{C}_1^p}a^{\frac{p-2}{2}}\|h\|_E^2.
\end{aligned}
\end{equation}
We now estimate the term involving $\eta$. Since $\eta \in E^-$, it follows that
\begin{equation}\label{3.6}
\begin{aligned}
\tau^{-2}c^2 d^2I(u)[\eta,\eta]
&=-2\tau^{-2}c^2\|\eta\|_E^2-2\tau^{-2}c^2\int_{\R^3}|x|^{-b}|u|^{p-2}|\eta|^2dx\\
&\quad-2(p-2)\tau^{-2}c^2\int_{\R^3}|x|^{-b}|u|^{p-4}[\Re(u,\eta)]^2dx\\
&\le 0.
\end{aligned}
\end{equation}
Finally, for $\xi\in E^-$ we have that
\begin{align*}
d^2I(u)[\xi,\xi]
&=-2\|\xi\|_E^2-2\int_{\R^3}|x|^{-b}|u|^{p-2}|\xi|^2dx\\
&\quad-2(p-2)\int_{\R^3}|x|^{-b}|u|^{p-4}[\Re(u,\xi)]^2dx\\
&\le -2\|h^-\|_E^2.
\end{align*}
Inserting \eqref{3.4}--\eqref{3.6} and the estimate for $d^2I(u)[\xi,\xi]$ into \eqref{3.3}, we obtain
\[
d^2I(u)[h,h]-2\mu(u)\|h\|_{L^2}^2\le -2\bigl[1-\frac{2^{\frac{3p}{2}}\mathcal{S}(m+1)^{\frac{p-2}{2}}}
{\mathcal{C}_1^p}a^{\frac{p-2}{2}}\bigr]\|h\|_E^2<0.
\]
So $u$ is a strict local maximum.
\end{proof}

We now turn to the existence of a global maximizer. For any $v\in S_a\cap\mathcal{V}_\chi^+$, consider the maximization problem
\[
\Psi_W:=\sup_{u\in S_{a,W}}I(u).
\]
\begin{lemma}\label{Lemma3.3}
Let $a\in(0,a_1)$. Then for any $v\in S_a\cap\mathcal{V}_\chi^+$, we have
\[
\frac{ma}{2} < \Psi_W \le \|v\|_E^2.
\]
\end{lemma}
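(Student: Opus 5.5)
The plan is to get both bounds by direct evaluation. The upper bound comes from the decomposition of elements of $S_{a,W}$ already used in Lemma~\ref{Lemma3.1}. The lower bound comes from testing $I$ at the single element $v\in S_{a,W}$ and absorbing the nonlinear term using the smallness $a<a_1$.

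\emph{Upper bound.} Take any $u\in S_{a,W}$ and write $u=tv+w$ with $t\in\R$ and $w\in E^-$; the coefficient $t$ may be taken real, as in the proof of Lemma~\ref{Lemma3.1}. Since $v\in E^+$ and the splitting $L^2=L^+\oplus L^-$ is orthogonal, the constraint gives $t^2a+\|w\|_{L^2}^2=a$, hence $|t|\le 1$. Dropping the nonpositive terms then yields
\[
I(u)=t^2\|v\|_E^2-\|w\|_E^2-\frac{2}{p}\int_{\R^3}|x|^{-b}|u|^p\,dx\le \|v\|_E^2 .
\]
Taking the supremum over $u\in S_{a,W}$ gives $\Psi_W\le\|v\|_E^2$, and in particular $\Psi_W$ is finite.

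\emph{Lower bound.} The element $v$ itself lies in $S_{a,W}$, corresponding to $t=1$ and $w=0$. So $\Psi_W\ge I(v)=\|v\|_E^2-\frac{2}{p}\int_{\R^3}|x|^{-b}|v|^p\,dx$. For the quadratic part I use $\|v\|_E^2\ge m\|v\|_{L^2}^2=ma$. For the nonlinear part I combine the continuity estimate of Lemma~\ref{Lemma2.1} with the norm equivalence $\mathcal{C}_1\|v\|_{H^{\frac12}}\le\|v\|_E$ and with $\|v\|_E^2<\chi=2(m+1)a$:
\[
\frac{2}{p}\int_{\R^3}|x|^{-b}|v|^p\,dx\le \frac{2}{p}\,\frac{\mathcal{S}}{\mathcal{C}_1^p}\|v\|_E^p< \frac{2}{p}\,\frac{\mathcal{S}\,2^{\frac p2}(m+1)^{\frac p2}}{\mathcal{C}_1^p}\,a^{\frac p2}.
\]
It therefore suffices to show that this last quantity is at most $\frac{ma}{2}$.

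\emph{Step where care is needed.} This is matching the constant with the definition of $a_1$. From $a<a_1$ we get
\[
a^{\frac{p-2}{2}}<\frac{m\,\mathcal{C}_1^p}{2^{\frac{3p+6}{2}}\mathcal{S}(m+1)^{\frac p2}}.
\]
Substituting this, the nonlinear term is bounded by
\[
\frac{2}{p}\,2^{\frac p2}\,2^{-\frac{3p+6}{2}}\,ma=\frac{2}{p}\,2^{-p-3}\,ma,
\]
which is much smaller than $\frac{ma}{2}$ because $p>2$. Hence $I(v)>ma-\frac{ma}{2}=\frac{ma}{2}$. The inequality is strict because $\|v\|_E^2<\chi$ is strict and $a<a_1$ is strict. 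Therefore $\Psi_W\ge I(v)>\frac{ma}{2}$, which completes the proof.
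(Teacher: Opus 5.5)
Your proposal is correct and follows the paper's proof. The upper bound is obtained from $I(u)\le\|u^+\|_E^2\le\|v\|_E^2$, and the lower bound from testing $I$ at $v$, using the weighted Sobolev estimate of Lemma~\ref{Lemma2.1}, $\|v\|_E^2<\chi$ and $\|v\|_E^2\ge ma$. The only cosmetic difference is that you bound $\|v\|_E^p$ entirely by $\chi^{p/2}$, which gives an additive estimate, whereas the paper keeps a factor $\|v\|_E^2$ and obtains $I(v)\ge\bigl(1-Ka^{\frac{p-2}{2}}\bigr)\|v\|_E^2$; your explicit check of the constant against $a_1$ is correct.
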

\begin{proof}
The upper bound is immediate: for any $u\in S_{a,W}$,
\[
I(u)=\|u^+\|_E^2-\|u^-\|_E^2-\frac{2}{p}\int_{\R^3}|x|^{-b}|u|^p dx \le \|u^+\|_E^2 \le \|v\|_E^2.
\]
For the lower bound, note that $E\cong H^{\frac{1}{2}}(\R^3;\C^4)$ and by the weighted Sobolev embedding $H^{\frac{1}{2}}(\R^3;\C^4)\hookrightarrow L^p(\R^3,|x|^{-b}dx;\C^4)$ (valid because $p\in(2,3-b)$) yield that
\[
\int|x|^{-b}|v|^p dx\le \frac{\mathcal{S}}{\mathcal{C}_1^p}\|v\|_E^p< \frac{[2(m+1)]^{\frac{p-2}{2}}\mathcal{S}}{\mathcal{C}_1^p} a^{\frac{p-2}{2}}\|v\|_E^2.
\]
In addition, we have $\|v\|_E^2\ge m\|v\|_{L^2}^2=ma$. Hence
\begin{align*}
\Psi_W &\ge I(v)=\|v\|_E^2-\frac{2}{p}\int_{\R^3}|x|^{-b}|v|^p dx \ge \left(1-\frac{2^{\frac p2}(m+1)^{\frac{p-2}{2}}\mathcal{S}}
{p\mathcal{C}_1^p}a^{\frac{p-2}{2}}\right)\|v\|_E^2\\
&>\frac{ma}{2}.
\end{align*}
This completes the proof.
\end{proof}

%Combining the previous lemmas, we can now prove the existence and smoothness of the maximizer.
Set
\[
a^*:=\min\left\{a_1,
\left(\frac{p\mathcal{C}_1^p}
{(\frac{3m+2}{m}+2^{p})2^{\frac{p+4}{2}}(m+1)^{\frac{p}{2}}\mathcal{S}}\right)^{\frac{2}{p-2}},
\left(\frac{pm^2\mathcal{C}_1^p}{2^{\frac{p+10}{2}}(m+1)^{\frac {p+2}{2}}\mathcal{S}}\right)^{\frac{2}{p-2}}\right\}.
\]
\begin{lemma}\label{Lemma3.4}
Let $a\in(0,a^*)$. Then for any $v \in S_a\cap\mathcal{V}_\chi^+$, there exists a unique element $\phi(v) \in S_{a,W}$ which is the strict global maximizer of $I$ on $S_{a,W}$, that is,
\[
I(\phi(v)) = \sup_{u \in S_{a,W}} I(u) = \Psi_W.
\]
Moreover, the mapping $\phi \colon S_a\cap\mathcal{V}_\chi^+ \to S_{a,W}$, $v \mapsto \phi(v)$, is $C^1$, and for every $v \in S_a\cap\mathcal{V}_\chi^+$,
\[
dI(\phi(v))[h] - 2\mu(\phi(v)) \operatorname{Re}(\phi(v), h)_{L^2} = 0, \quad \forall h \in W \oplus E^-.
\]
\end{lemma}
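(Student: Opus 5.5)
The plan has three parts: existence of a maximizer, uniqueness, and $C^1$ dependence on $v$.

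\textbf{Existence.} By Lemma~\ref{Lemma3.3}, $\Psi_W$ is finite and $\Psi_W>\frac{ma}{2}$. I will parametrize $S_{a,W}$ by $\eta\in E^-$ with $\|\eta\|_{L^2}^2\le a$, writing $u=\tau v+\eta$ with $\tau=\sqrt{1-\|\eta\|_{L^2}^2/a}$; the phase $\tau\in\C$, $|\tau|$ fixed, can be handled by the same argument. Take a maximizing sequence and apply Ekeland's variational principle on the complete $C^1$ manifold $S_{a,W}$. This produces a Palais--Smale sequence at level $\Psi_W\ge\frac{ma}{2}$. Lemma~\ref{Lemma3.1} then gives boundedness and precompactness, and the strong limit $\phi(v)$ lies in $S_a$ because $L^2$ convergence is implied by $E$ convergence. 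It satisfies $I(\phi(v))=\Psi_W$ and is a constrained critical point with multiplier $\mu(\phi(v))=\lim\mu(u_n)>0$.

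One issue must be checked: Lemma~\ref{Lemma3.1}(ii) used $\|u_n\|_E^2\lesssim a$. That bound follows from $\|u_n^-\|_E^2\le\|v\|_E^2-I(u_n)<\chi$. I will verify that the constant in $a^*$ absorbs the resulting factor, which is presumably where the second and third terms of $a^*$ enter.

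\textbf{Uniqueness and strictness.} Every critical point of $I|_{S_{a,W}}$ at a level $\ge\frac{ma}{2}$ is a strict local maximum with negative definite Hessian on the tangent space, by Lemma~\ref{Lemma3.2}. To show uniqueness I will use a mountain-pass type argument. Suppose there are two global maximizers $u_1\neq u_2$. The set $\{u\in S_{a,W}: I(u)>\frac{ma}{2}\}$ is convex-like in the $\eta$ variable. Concretely, $\eta\mapsto I(\tau(\eta)v+\eta)$ is strictly concave there: the computation in Lemma~\ref{Lemma3.2} bounds the Hessian at all points, not only critical ones, once the multiplier terms are replaced by the first-order terms. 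This is exactly the check where the smallness of $a$ in $a^*$ is needed. Strict concavity forces uniqueness of the maximizer.

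If that route fails, the fallback is the mountain pass theorem. Two strict local maxima joined by a path in the superlevel set would give a third critical point of saddle type at level $\ge\frac{ma}{2}$. The Palais--Smale condition holds by Lemma~\ref{Lemma3.1}, and Lemma~\ref{Lemma3.2} rules such a saddle out, because it says every critical point at these levels is a strict maximum.

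\textbf{$C^1$ regularity.} Apply the implicit function theorem to
\[
F(v,\eta,\mu)=\bigl(P^-\text{-component of } dI(u)-2\mu u,\ \text{$W$-component},\ \|u\|_{L^2}^2-a\bigr),
\]
which vanishes at $(v,\phi(v)^-,\mu(\phi(v)))$. Its partial derivative in $(\eta,\mu)$ is an isomorphism: the negative definiteness from Lemma~\ref{Lemma3.2} on the tangent space, combined with the transversality $\Re(u,u)_{L^2}=a\neq0$, gives invertibility via Lax--Milgram. Since $I$ is $C^2$ for $p>2$, the map $v\mapsto\phi(v)$ is $C^1$, and uniqueness identifies the local implicit branch with the global maximizer.

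\textbf{Main obstacle.} I expect the uniqueness step to be the hardest: it requires a global concavity estimate uniform over the whole superlevel set, not just at critical points.
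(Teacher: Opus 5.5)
Your existence step, your $C^1$ step and your fallback for uniqueness are essentially the paper's. Existence there is Ekeland plus Lemma~\ref{Lemma3.1}, and it only needs $a<a_1$; the remaining terms of $a^*$ enter only in the uniqueness and $C^1$ steps. For $C^1$, the paper eliminates the constraint and the multiplier by writing $u=\tau(\eta)v+\eta$. It then applies the implicit function theorem to $\langle F(v,\eta),\xi\rangle=dI(\tau(\eta)v+\eta)[\tau'(\eta)[\xi]\,v+\xi]$, with Lax--Milgram fed by the coercive bound from the proof of Lemma~\ref{Lemma3.2}. Your bordered Lagrange system is equivalent, provided $t$ is an extra unknown; otherwise the constraint equation is redundant. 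For uniqueness, the paper joins $\phi_1,\phi_2$ by the segment $\eta(r)=r\eta_2+(1-r)\eta_1$. It shows $I>\frac{ma}{2}$ along it using $\|\eta_i\|_E^2\le\|v\|_E^2-I(v)=\frac2p\int_{\R^3}|x|^{-b}|v|^p\,dx=O(a^{p/2})$, and then combines the mountain pass theorem with Lemma~\ref{Lemma3.2}. That segment is the path your fallback presupposes.

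Your primary route, strict concavity of $g(\eta):=I(\tau(\eta)v+\eta)$, is genuinely different and works, but not as phrased. Convexity of the superlevel set is not available a priori. Also, the computation of Lemma~\ref{Lemma3.2} cannot be reused verbatim, because it uses the multiplier identity. Work instead on the convex ball $K=\{\eta\in E^-:\|\eta\|_E^2\le\frac{ma}{2}\}$. On $K$ one has $\tau\ge2^{-1/2}$ and $\|u\|_E^2\lesssim a$, and for $a$ small $K$ contains all of $\{g\ge I(v)\}$, hence every maximizer, by the bound just quoted. Differentiate $g(\eta)=(1-\frac1a\|\eta\|_{L^2}^2)\|v\|_E^2-\|\eta\|_E^2-\frac2p\int_{\R^3}|x|^{-b}|u|^p\,dx$ directly, with $u=\tau(\eta)v+\eta$. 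Using convexity of $u\mapsto\int|x|^{-b}|u|^p$ and $|\tau''(\eta)[\xi,\xi]|\le\|\xi\|_{L^2}^2/(a\tau^3)$, this gives
\[
D^2g(\eta)[\xi,\xi]\le-2\|\xi\|_E^2+\frac{2\|\xi\|_{L^2}^2}{a\tau^3}\int_{\R^3}|x|^{-b}|u|^{p-1}|v|\,dx\le-\bigl(2-Ca^{\frac{p-2}{2}}\bigr)\|\xi\|_E^2\quad\text{on } K.
\]
This route buys several things:
\begin{itemize}
\item Uniqueness becomes elementary. You need no mountain pass theorem on $S_{a,W}$, and no appeal to the fact, invoked by the paper, that a mountain-pass critical point cannot be a strict local maximum.
\item Every critical point of $g$ in $K$ is the global maximizer, which is exactly what identifies the implicit-function branch with $\phi(v)$. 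Your phrase ``uniqueness identifies the branch'' is too quick otherwise: uniqueness of the maximizer alone does not place $\phi(v)$ in the implicit-function neighbourhood. The paper needs a separate continuity argument for $\phi$ (compactness plus uniqueness) for this.
\item Concavity even yields existence without Ekeland: a continuous concave $g$ attains its supremum on the closed bounded convex set $K$, and $\sup g=\sup_K g$.
\end{itemize}
The paper's route, in exchange, uses Lemmas~\ref{Lemma3.1} and~\ref{Lemma3.2} as black boxes and needs only a lower bound along one segment.

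Finally, your remark that a complex phase of $\tau$ ``can be handled by the same argument'' is not right. $I$ is invariant under $u\mapsto e^{i\theta}u$ (in particular it is even), so $-\phi(v)$ is a further maximizer in $S_{a,W}$, and so is $e^{i\theta}\phi(v)$ if $W$ is a complex span. Uniqueness holds only on the slice $\tau>0$. This is the normalization the paper uses tacitly when it writes $\tau_i=\sqrt{1-\frac1a\|\eta_i\|_{L^2}^2}$.
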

\begin{proof}
Since $\Psi_W > \frac{ma}{2}$ and $I$ is bounded above on $S_{a,W}$, Ekeland's variational principle yields a Palais-Smale sequence $\{u_n\} \subset S_{a,W}$ for $I$ at level $\Psi_W$. By Lemma \ref{Lemma3.1}, this sequence is precompact in $E$ (because $\Psi_W \ge \frac{ma}{2}$ for sufficiently small $a$), so a subsequence converges to some $\phi(v)\in S_{a,W}$. Then
\[
I(\phi(v)) = \Psi_W = \sup_{u\in S_{a,W}} I(u), \quad \|dI|_{S_{a,W}}(\phi(v))\| = 0.
\]

We first prove uniqueness. Suppose there exist two distinct maximizers $\phi_1,\phi_2\in S_{a,W}$ with
\[
I(\phi_1)=I(\phi_2)=\sup_{u\in S_{a,W}}I(u)=\Psi_W>\frac{ma}{2}.
\]
Write $\phi_i=\tau_i v+\eta_i$ with $\eta_i\in E^-$ and $\tau_i=\sqrt{1-\frac{1}{a}\|\eta_i\|_{L^2}^2}$.
Since $I(v)\le I(\phi_i)$ and $I(\phi_i)\le\|v\|_E^2-\|\eta_i\|_E^2$, we obtain
\begin{equation}\label{3.7}
\|\eta_i\|_E^2\le\|v\|_E^2-I(v)=\frac{2}{p}\int_{\mathbb{R}^3}|x|^{-b}|v|^p dx\le \frac{2^{\frac p2}(m+1)^{\frac{p-2}{2}}\mathcal{S}}{p\mathcal{C}_1^p} a^{\frac{p-2}{2}}\|v\|_E^2,
\end{equation}
where the last inequality follows from $\|u\|_{L^p(|x|^{-b})}^p \le \mathcal{S}\|u\|_{H^{\frac{1}{2}}}^p$ and the equivalence $E\cong H^{\frac12}(\mathbb{R}^3;\mathbb{C}^4)$.

Define the set of paths
\[
\Gamma:=\{\gamma:[0,1]\to S_{a,W}\mid \gamma(0)=\phi_1,\ \gamma(1)=\phi_2\}
\]
and the minimax level
\[
l:=\sup_{\gamma\in\Gamma}\min_{r\in[0,1]}I(\gamma(r)).
\]

Choose the linear path
\[
\gamma(r)=\tau(r)v+\eta(r),\quad \eta(r)=r\eta_2+(1-r)\eta_1,\quad \tau(r)=\sqrt{1-\frac{1}{a}\|\eta(r)\|_{L^2}^2},
\]
which clearly belongs to $S_{a,W}$. Then we have
\begin{equation}\label{3.8}
\int|x|^{-b}|\gamma(r)|^p dx\le \frac{\mathcal{S}}{\mathcal{C}_1^p}\|\gamma(r)\|_E^p\le \frac{2^{\frac{3p-2}{2}}(m+1)^{\frac{p-2}{2}}\mathcal{S}}
{\mathcal{C}_1^p}a^{\frac{p-2}{2}}\|v\|_E^2.
\end{equation}
It follows from \eqref{3.7}, \eqref{3.8} and the fact that $\|\eta(r)\|_{L^2}^2\le\frac{1}{m}\|\eta(r)\|_E^2$ that
\[
\begin{aligned}
I(\gamma(r))&=\|\tau(r)v\|_E^2-\|\eta(r)\|_E^2-\frac{2}{p}\int_{\mathbb{R}^3}|x|^{-b}|\gamma(r)|^p dx\\
&=\left(1-\frac{1}{a}\|\eta(r)\|_{L^2}^2\right)\|v\|_E^2
-\|\eta(r)\|_E^2-\frac{2}{p}\int_{\mathbb{R}^3}|x|^{-b}|\gamma(r)|^p dx\\
&\ge \left[1-\frac{(\frac{3m+2}{m}+2^{p})2^{\frac p2}(m+1)^{\frac{p-2}{2}}\mathcal{S}}
{p\mathcal{C}_1^p}a^{\frac{p-2}{2}}\right]\|v\|_E^2\\
&>\frac{ma}{2}.
\end{aligned}
\]
Thus, we have
\[
\min_{r\in[0,1]}I(\gamma(r))\ge\frac{ma}{2}.
\]
Hence the minimax level satisfies $l\ge\frac{ma}{2}$.

By the mountain pass theorem, there exists a critical point $u\in S_{a,W}$ of $I$ with $I(u)=l\ge\frac{ma}{2}$. However, Lemma \ref{Lemma3.2} asserts that every critical point of $I$ on $S_{a,W}$ with $I(u)\ge\frac{ma}{2}$ is a strict local maximum, which contradicts the fact that a mountain pass critical point is not a local maximum. Therefore the two maximizers cannot be distinct, i.e., the global maximizer is unique.

Next we study the smoothness of $\phi$.
Fix an arbitrary $v_0\in S_a\cap\mathcal{V}_\chi^+$ and set $W_0=\operatorname{span}\{v_0\}$.
By Lemma~\ref{Lemma3.4} there exists a unique global maximizer $\phi(v_0)\in S_{a,W_0}$ of $I$ on $S_{a,W_0}$,
and we can write $\phi(v_0)=\tau(\eta_0)v_0+\eta_0$ with $\eta_0\in E^-$ and $\tau(\eta_0)=\sqrt{1-\frac{1}{a}\|\eta_0\|_{L^2}^2}$.
Since $\mathcal{V}_\chi^+$ is open in $E^+$ and $\{\eta\in E^-:\|\eta\|_{L^2}^2<a\}$ is open in $E^-$,
we can pick neighbourhoods $U_0\subset\mathcal{V}_\chi^+$ of $v_0$ and $V_0$ of $\eta_0$ on which the smooth local parametrisation
\[
\mathcal{P}(v,\eta):=\tau(\eta)v+\eta,\quad \tau(\eta)=\sqrt{1-\frac{1}{a}\|\eta\|_{L^2}^2},
\]
is well defined and satisfies $\mathcal{P}(v,\eta)\in S_{a,\operatorname{span}\{v\}}$ (with $\phi(v_0)=\mathcal{P}(v_0,\eta_0)$).
For $\xi\in E^-$ the corresponding tangent vector is $h_\xi=\langle d\tau(\eta),\xi\rangle v+\xi$, where $\langle d\tau(\eta),\xi\rangle=-\frac{1}{\tau(\eta)a}\Re(\eta,\xi)_{L^2}$.

To apply the implicit function theorem, we introduce the $C^1$ map
\[
F:U_0\times V_0\to(E^-)^*,\quad \langle F(v,\eta),\xi\rangle:=\langle dI(\mathcal{P}(v,\eta)),h_\xi\rangle .
\]
By construction $F(v,\eta)=0$ exactly when $\mathcal{P}(v,\eta)$ is a critical point of $I$ on $S_{a,\operatorname{span}\{v\}}$, and $F(v_0,\eta_0)=0$ thanks to Lemma~\ref{Lemma3.4}.
We now verify that $D_\eta F(v_0,\eta_0)$ has a bounded inverse. For $\xi, k \in E^-$, set
$\bar{h}_\xi = \langle d\tau(\eta_0),\xi\rangle v_0 + \xi$
and define $\bar{h}_k$ analogously. A direct computation then yields
\[
D_\eta F(v_0,\eta_0)[\xi,k]
= d^2 I(\mathcal{P}(v_0,\eta_0))[\bar{h}_\xi,\bar{h}_k] + \langle dI(\mathcal{P}(v_0,\eta_0)), d^2\tau(\eta_0)[\xi,k]v_0\rangle,
\]
where $D_\eta F(v_0,\eta_0)[\xi,k]$ denotes $(D_\eta F(v_0,\eta_0)\xi)(k)$.
Using the critical point condition $\langle dI(\mathcal{P}(v_0,\eta_0)),v_0\rangle = 2\mu(\mathcal{P}(v_0,\eta_0))\tau(\eta_0)a$ and the explicit form of $d^2\tau$, we obtain
\begin{equation}\label{3.9}
D_\eta F(v_0,\eta_0)[\xi,\xi]
\le d^2 I(\mathcal{P}(v_0,\eta_0))[\bar{h}_\xi,\bar{h}_\xi]-2\mu(\mathcal{P}(v_0,\eta_0))\|\bar{h}_\xi\|_{L^2}^2 .
\end{equation}
Substituting $\langle d\tau(\eta_0),\xi\rangle v_0 = -\frac{1}{\tau(\eta_0)a}\Re(\eta_0,\xi)_{L^2}$ and using $m\|\cdot\|_{L^2}^2 \le \|\cdot\|_E^2$, we get
\[
\|\langle d\tau(\eta_0),\xi\rangle v_0\|_E
\le \frac{\|\eta_0\|_{L^2}\|\xi\|_{L^2}}{\tau(\eta_0)a}\|v_0\|_E
\le \frac{\|\eta_0\|_{L^2}\|\xi\|_E \|v_0\|_E}{\sqrt{m} \tau(\eta_0)a}
\le \frac12 \|\xi\|_E,
\]
where the last inequality follows from \eqref{3.7} and the condition $a \in (0,a^*)$. Consequently,
\[
\frac12\|\xi\|_E \le \|\bar{h}_\xi\|_E \le \frac32\|\xi\|_E .
\]
Lemma~\ref{Lemma3.2} provides $\alpha>0$ such that
\[
d^2 I(\mathcal{P}(v_0,\eta_0))[h,h]-2\mu\|h\|_{L^2}^2 \le -\alpha\|h\|_E^2, \quad \forall h\in T_{\mathcal{P}(v_0,\eta_0)}S_{a,W_0}.
\]
Inserting $h=\bar{h}_\xi$ into \eqref{3.9} gives
\[
D_\eta F(v_0,\eta_0)[\xi,\xi] \le -\alpha\|\bar{h}_\xi\|_E^2 \le -\frac{\alpha}{4}\|\xi\|_E^2 .
\]
Define the continuous bilinear form
\[
B[\xi,k] := -D_{\eta}F(v_{0},\eta_{0})[\xi,k],\quad \xi,k\in E^{-}.
\]
The estimate above yields
\[
B[\xi,\xi] \ge \frac{\alpha}{4}\|\xi\|_{E}^{2},
\]
so $B$ is coercive. By the real Lax-Milgram theorem, for every bounded linear functional $f\in (E^{-})^{*}$ there exists a unique $\xi_{f}\in E^{-}$ such that
\[
B[\xi_{f},k] = \langle f,k\rangle, \quad \forall k\in E^{-},
\]
and $\|\xi_{f}\|_{E} \le \frac{4}{\alpha} \|f\|_{(E^{-})^{*}}$.
Because $\langle -D_{\eta}F(v_{0},\eta_{0})\xi,k\rangle = B[\xi,k]$ $\forall\xi,k\in E^{-}$, the operator $-D_{\eta}F(v_{0},\eta_{0}):E^{-}\to (E^{-})^{*}$ is bijective and the norm of its inverse is at most $\frac4\alpha$. Hence $D_{\eta}F(v_{0},\eta_{0})$ has a bounded inverse.

Thus the implicit function theorem applies: $F$ is $C^1$, $F(v_0,\eta_0)=0$, and $D_\eta F(v_0,\eta_0)$ is invertible with bounded inverse.
Consequently, there exist neighbourhoods $U\subset U_0$ of $v_0$, $V\subset V_0$ of $\eta_0$, and a unique $C^1$ map $\tilde\eta:U\to V$ such that $\tilde\eta(v_0)=\eta_0$ and $F(v,\tilde\eta(v))=0$ for all $v\in U$.
Define $\tilde\phi(v)=\mathcal{P}(v,\tilde\eta(v))=\tau(\tilde\eta(v))v+\tilde\eta(v)$.
The condition $F(v,\tilde\eta(v))=0$ means exactly that $\langle dI(\tilde\phi(v)),h_\xi\rangle=0$ for every $\xi\in E^-$, so $\tilde\phi(v)$ is a critical point of $I$ on $S_{a,\operatorname{span}\{v\}}$ and $I(\tilde\phi(v))>\frac{ma}{2}$.
By Lemma~\ref{Lemma3.2}, $\tilde\phi(v)$ is a strict local maximum.

We now identify $\tilde\phi(v)$ with the global maximizer $\phi(v)$.
The compactness and uniqueness proved in Lemma~\ref{Lemma3.4} imply that the map $v\mapsto\phi(v)$ is continuous at $v_0$.
Hence, for $v$ sufficiently close to $v_0$, the point $\phi(v)$ can be written as $\mathcal{P}(v,\eta)$ with $\eta$ belonging to the neighbourhood $V$ where the implicit function theorem applies.
By construction, $F(v,\tilde\eta(v))=0$, and writing $\phi(v)=\mathcal{P}(v,\eta)$, we also have $F(v,\eta)=0$ because $\phi(v)$ is a critical point of $I$ on $S_{a,\operatorname{span}\{v\}}$. By the local uniqueness part of the implicit function theorem, this forces $\tilde\eta(v)=\eta$.
Therefore $\tilde\phi(v)=\phi(v)$ on $U$ (after possibly shrinking $U$ further).
Thus $\phi$ is $C^1$ in a neighbourhood of $v_0$. Since $v_0$ was arbitrary, $\phi:S_a\cap\mathcal{V}_\chi^+\to E$ is $C^1$ on its whole domain.
\end{proof}
\subsection{Minimization problem}

From the above discussion, the reduced functional $\Phi_E: S_a\cap\mathcal{V}_\chi^+ \to \R$, defined by
\[
\Phi_E(v) = I(\phi(v)) = \sup_{u\in S_{a,W}} I(u), \quad W = \operatorname{span}\{v\},
\]
is well defined. The original minimax problem is thus reduced to a minimization problem:
\[
e_a = \inf_{\substack{v\in S_a\cap\mathcal{V}_\chi^+ \\ W=\operatorname{span}\{v\}}} \sup_{u\in S_{a,W}} I(u) = \inf_{v\in S_a\cap\mathcal{V}_\chi^+} \Phi_E(v).
\]

To obtain an upper bound for $e_a$, we need a more refined estimate. The next lemma provides a useful inequality.

\begin{lemma}\label{Lemma3.5}
Let $u\in H^{\frac{1}{2}}(\R^3;\C^4)$ with $\|u\|_{L^2}^2=a$. Write $u=tv+u^-$, where $u^-$ lies in the negative spectral subspace, $v$ lies in the positive spectral subspace with $\|v\|_{L^2}^2=a$, and $t=\sqrt{1-\frac{1}{a}\|u^-\|_{L^2}^2}$ so that $\|u\|_{L^2}^2=a$. Then
\[
\int_{\R^3}|x|^{-b}|u|^p dx \ge 2^{1-p} \int_{\R^3}|x|^{-b}|v|^p dx
-\frac{p\mathcal{S}}{(2\mathcal{C}_1)^p a}\|u^-\|_{L^2}^2\|v\|_E^p - \frac{\mathcal{S}}{\mathcal{C}_1^p}\|u^-\|_E^p.
\]
If, in addition, $v = U_{\mathrm{FW}}^{-1}\begin{pmatrix}
w \\
0
\end{pmatrix}$ with $w\in H^1(\R^3;\C^2)$ and $\|w\|_{L^2}^2=a$, then
\begin{equation}\label{3.10}
\begin{aligned}
\int_{\R^3}|x|^{-b}|u|^p dx &\ge 4^{1-p} \int_{\R^3}|x|^{-b}|w|^p dx - \frac{2^{1-2p}\mathcal{S}}
{m^{\frac p2}\mathcal{C}_1^p}\|\nabla w\|_{L^2}^p\\
&\quad-\frac{p\mathcal{S}}{(2\mathcal{C}_1)^p a}\|u^-\|_{L^2}^2\|v\|_E^p - \frac{\mathcal{S}}{\mathcal{C}_1^p}\|u^-\|_E^p.
\end{aligned}
\end{equation}
\end{lemma}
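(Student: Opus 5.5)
The plan is to prove both inequalities pointwise with the elementary convexity bound $|z_1+z_2|^p\le 2^{p-1}(|z_1|^p+|z_2|^p)$, and then control every error term through the weighted embedding of Lemma~\ref{Lemma2.1} in the form
\[
\int_{\R^3}|x|^{-b}|f|^p\,dx\le \mathcal{S}\|f\|_{H^{\frac12}}^p\le \frac{\mathcal{S}}{\mathcal{C}_1^p}\|f\|_E^p .
\]

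\textbf{First inequality.} Apply the convexity bound to $tv=u-u^-$. This gives
\[
|u|^p\ge 2^{1-p}t^p|v|^p-|u^-|^p .
\]
Multiply by $|x|^{-b}$ and integrate; the last term is at most $\frac{\mathcal{S}}{\mathcal{C}_1^p}\|u^-\|_E^p$. For the factor $t^p$, set $s=\frac1a\|u^-\|_{L^2}^2\in[0,1]$. Since $p/2>1$, Bernoulli's inequality gives
\[
t^p=(1-s)^{p/2}\ge 1-\tfrac p2 s .
\]
It follows that
\[
2^{1-p}t^p\int|x|^{-b}|v|^p\ge 2^{1-p}\int|x|^{-b}|v|^p-\frac{p}{2^{p}a}\|u^-\|_{L^2}^2\int|x|^{-b}|v|^p .
\]
Bounding $\int|x|^{-b}|v|^p\le\frac{\mathcal{S}}{\mathcal{C}_1^p}\|v\|_E^p$ turns the middle term into exactly $\frac{p\mathcal{S}}{(2\mathcal{C}_1)^pa}\|u^-\|_{L^2}^2\|v\|_E^p$. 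This proves the first inequality.

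\textbf{Second inequality, reduction.} For \eqref{3.10}, start from the first inequality and bound $\int|x|^{-b}|v|^p$ from below. Using the Fourier representation of $\mathbf{U}^{-1}$, we have
\[
\hat v=\rho_+\begin{pmatrix}\hat w\\0\end{pmatrix}-\rho_-\beta\frac{\alpha\cdot\xi}{|\xi|}\begin{pmatrix}\hat w\\0\end{pmatrix}.
\]
So $v=\binom{w}{0}+r$, where
\[
\hat r=\bigl((\rho_+-1)\hat w,\ \pm\rho_-\tfrac{\sigma\cdot\xi}{|\xi|}\hat w\bigr).
\]
Because $\sigma\cdot\xi/|\xi|$ is unitary, $|\hat r|^2=\bigl((1-\rho_+)^2+\rho_-^2\bigr)|\hat w|^2=2(1-\rho_+)|\hat w|^2$. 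The convexity bound again gives $|v|^p\ge 2^{1-p}|w|^p-|r|^p$, hence
\[
2^{1-p}\int|x|^{-b}|v|^p\ge 4^{1-p}\int|x|^{-b}|w|^p-2^{1-p}\frac{\mathcal{S}}{\mathcal{C}_1^p}\|r\|_E^p .
\]
This requires $r\in E$, which holds because $w\in H^1$.

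\textbf{Second inequality, main obstacle.} The key step is the symbol estimate for $\|r\|_E^2=\int\lambda(\xi)\,2(1-\rho_+)|\hat w|^2\,d\xi$. Using $1-\rho_+=\frac{1-\rho_+^2}{1+\rho_+}$ and $1-\rho_+^2=\frac{\lambda-m}{2\lambda}$, we get
\[
2\lambda(1-\rho_+)=\frac{|\xi|^2}{(\lambda+m)(1+\rho_+)}\le\frac{|\xi|^2}{2m(1+\rho_+)} .
\]
Since $\rho_+\ge 1/\sqrt2$, this yields $\|r\|_E^2\le c_0\,m^{-1}\|\nabla w\|_{L^2}^2$ with $c_0=\frac{1}{2(1+1/\sqrt2)}$. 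The stated constant $2^{1-2p}$ corresponds to $c_0=\frac14$. The bound above only gives $c_0\approx0.29$, so matching $\frac14$ exactly may require a sharper symbol bound, for instance keeping $\lambda+m$ in place of $2m$, or re-splitting the convexity weights. Failing that, I would accept a slightly different absolute constant in front of $\|\nabla w\|_{L^2}^p$. This is harmless for the later application, which only needs a bound of the form $C\,m^{-p/2}\|\nabla w\|_{L^2}^p$.

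Combining the three displayed bounds then gives \eqref{3.10}.
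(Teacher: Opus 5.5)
Your proof of the first inequality is correct and is the paper's argument: the convexity bound applied to $tv=u-u^-$, Bernoulli's inequality for $t^p$, and Lemma~\ref{Lemma2.1}.

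For \eqref{3.10} there is a small but genuine gap. As written, your $\|\nabla w\|_{L^2}^p$ term has coefficient $\frac{2^{1-p}\mathcal{S}}{(2+\sqrt2)^{p/2}m^{p/2}\mathcal{C}_1^p}$. Since $2+\sqrt2<4$, this is strictly larger than the stated $\frac{2^{1-2p}\mathcal{S}}{m^{p/2}\mathcal{C}_1^p}$, so you prove a weaker inequality than the lemma asserts. You are right that Lemma~\ref{Lemma3.6} would survive, because only the power $\epsilon^p$ matters there, but that does not prove the statement.

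The loss comes from applying $\lambda+m\ge 2m$ and $1+\rho_+\ge 1+\frac{1}{\sqrt2}$ separately. The first is sharp only at $\xi=0$, the second only as $|\xi|\to\infty$. Within your decomposition the repair is one line. Since $0\le\rho_+\le 1$, you have $1+\rho_+\ge 1+\rho_+^2=\frac{3\lambda+m}{2\lambda}$, hence
\[
2\lambda(1-\rho_+)=\frac{\lambda-m}{1+\rho_+}\le\frac{2\lambda|\xi|^2}{(3\lambda+m)(\lambda+m)}\le\frac{|\xi|^2}{4m},
\]
where the last inequality is equivalent to $(3\lambda-m)(\lambda-m)\ge 0$. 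This gives $\|r\|_E^2\le\frac{1}{4m}\|\nabla w\|_{L^2}^2$, and your chain of bounds then yields \eqref{3.10} with exactly the stated constant.

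The paper splits $v$ differently and never needs a sharp symbol bound. It discards the lower spinor component via $|v|\ge|g|$ with $g=\mathcal{F}^{-1}[\rho_+\hat w]$, and writes $g=w-f$ with $f=\mathcal{F}^{-1}[(1-\rho_+)\hat w]$. The error symbol $1-\rho_+=\frac{\rho_-^2}{1+\rho_+}$ is quadratically small. Even the crude bound $(1-\rho_+)^2\le\rho_-^2$ then gives $\|f\|_E^2\le\frac12\int(\lambda-m)|\hat w|^2\,d\xi\le\frac{1}{4m}\|\nabla w\|_{L^2}^2$.

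Your route instead keeps the lower component $\rho_-\frac{\sigma\cdot\xi}{|\xi|}\hat w$, whose symbol is of order $\rho_-$ rather than $\rho_-^2$, inside the error $r$. That is why $\|r\|_E^2$ equals $\int\frac{\lambda-m}{1+\rho_+}|\hat w|^2\,d\xi$, and why the constant $\frac14$ is reached only through the tighter joint estimate above.
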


\begin{proof}
Since $u=tv+u^-$ and $|u|\ge t|v|-|u^-|$, the elementary inequality $(a-b)^p \ge 2^{1-p}a^p - b^p$ for $a,b\ge0$ and $p\ge1$ gives
\begin{equation}\label{3.11}
\int_{\R^3}|x|^{-b}|u|^p dx \ge 2^{1-p}t^p\int_{\R^3}|x|^{-b}|v|^p dx - \int_{\R^3}|x|^{-b}|u^-|^p dx.
\end{equation}
Using $t^p = (1-\frac{1}{a}\|u^-\|_{L^2}^2)^{\frac{p}{2}} \ge 1-\frac{p}{2a}\|u^-\|_{L^2}^2$ and the Sobolev embeddings, we obtain
\begin{equation}\label{3.12}
\int_{\R^3}|x|^{-b}|u|^p dx \ge 2^{1-p} \int_{\R^3}|x|^{-b}|v|^p dx
-\frac{p\mathcal{S}}{a( 2\mathcal{C}_1)^p }\|u^-\|_{L^2}^2\|v\|_E^p - \frac{\mathcal{S}}{\mathcal{C}_1^p}\|u^-\|_E^p.
\end{equation}
Now for $v=U_{FW}^{-1}\begin{pmatrix}
w  \\
0
\end{pmatrix}$ with $w\in H^1(\R^3;\C^2)$ and $\|w\|_{L^2}^2=a$, we have
\begin{equation*}
v(x)=\mathcal{F}^{-1}\!\left[ U^{-1}(\xi)\begin{pmatrix}\hat{w}(\xi)\\0\end{pmatrix} \right]\!(x)
= \begin{pmatrix}
\mathcal{F}^{-1}\big[ \rho_+(\xi)\hat{w}(\xi) \big](x) \\[4pt]
\mathcal{F}^{-1}\!\left[ \rho_-(\xi)\frac{\sigma\cdot\xi}{|\xi|}\hat{w}(\xi) \right]\!(x)
\end{pmatrix}.
\end{equation*}
Let $g(x)=\mathcal{F}^{-1}[\rho_+(\xi)\hat{w}]$ and $f(x)=\mathcal{F}^{-1}[(1-\rho_+(\xi))\hat{w}]$. Then $g=w-f$.  Another application of $(a-b)^p\ge 2^{1-p}a^p-b^p$ yields
\begin{equation}\label{3.13}
\int_{\R^3}|x|^{-b}|v|^p dx \ge \int_{\R^3}|x|^{-b}|g|^p dx \ge 2^{1-p}\int_{\R^3}|x|^{-b}|w|^p dx - \int_{\R^3}|x|^{-b}|f|^p dx.
\end{equation}
Furthermore, we have
\[
|1-\rho_+(\xi)|=\frac{1-\rho_+^2(\xi)}{1+\rho_+(\xi)}
=\frac{\rho_-^2(\xi)}{1+\rho_+(\xi)}\leq \rho_-^2(\xi),
\]
which yields
\[
\|f\|_E^2=\int_{\R^3}\lambda(\xi)|1-\rho_+(\xi)|^2|\hat{w}(\xi)|^2d\xi \le \int_{\R^3}\frac{1}{2}(\lambda(\xi)-m)|\hat{w}(\xi)|^2 d\xi \le \frac{1}{4m}\|\nabla w\|_{L^2}^2.
\]
Combined with the Sobolev inequalities, this implies
\begin{equation}\label{3.14}
\int_{\R^3}|x|^{-b}|f|^p dx \le \frac{\mathcal{S}}{\mathcal{C}_1^p}\|f\|_E^p
\le \frac{\mathcal{S}}
{(4m)^{\frac p2}\mathcal{C}_1^p}\|\nabla w\|_{L^2}^p.
\end{equation}
Together, \eqref{3.12}--\eqref{3.14} give \eqref{3.10}.
\end{proof}

\begin{lemma}\label{Lemma3.6}
Let $a\in(0,a^*)$. Then we have
\[
\inf_{v\in S_a\cap\mathcal{V}_{\frac\chi2}^+} \Phi_E(v) <\inf_{v\in S_a\cap(\mathcal{V}_\chi^+\setminus\mathcal{V}_{\frac{3\chi}{4}}^+)} \Phi_E(v).
\]
\end{lemma}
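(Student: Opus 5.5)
The plan is to separate the two infima by an explicit upper bound on the left side and an explicit lower bound on the right side, both linear in $a$ with different slopes. The key observation is that on the annulus $\mathcal{V}_\chi^+\setminus\mathcal{V}_{\frac{3\chi}{4}}^+$ the $E$-norm is large, of order $\frac32(m+1)a$. On the small ball one can get arbitrarily close to the bottom of the spectrum, where $\|v\|_E^2\approx ma$.

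\emph{Upper bound on the small ball.} By Lemma~\ref{Lemma3.3}, $\Phi_E(v)=\Psi_W\le\|v\|_E^2$ for every admissible $v$. I will test with
\[
v_s=\mathbf{U}_{\mathrm{FW}}^{-1}\begin{pmatrix} w_s\\ 0\end{pmatrix},\qquad w_s(x)=s^{\frac32}w(sx),
\]
where $w\in H^1(\R^3;\C^2)$ is fixed with $\|w\|_{L^2}^2=a$. Then $v_s\in E^+$ and $\|v_s\|_{L^2}^2=a$. Using the Foldy--Wouthuysen diagonalization,
\[
\|v_s\|_E^2=\int_{\R^3}\lambda(\xi)|\hat w_s(\xi)|^2\,d\xi\le ma+\frac{1}{2m}\|\nabla w_s\|_{L^2}^2=ma+\frac{s^2}{2m}\|\nabla w\|_{L^2}^2 .
\]
This tends to $ma$ as $s\to0$. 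Hence for small $s$ we have $v_s\in S_a\cap\mathcal{V}^+_{\frac\chi2}$, since $\frac\chi2=(m+1)a>ma$. Therefore
\[
\inf_{v\in S_a\cap\mathcal{V}_{\frac\chi2}^+}\Phi_E(v)\le ma.
\]
(In fact the infimum is at most $ma+\delta$ for every $\delta>0$, which suffices.)

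\emph{Lower bound on the annulus.} For $v$ in the annulus we have $v\in S_{a,W}$ with $W=\operatorname{span}\{v\}$, so $\Phi_E(v)\ge I(v)$. Exactly as in Lemma~\ref{Lemma3.3}, Lemma~\ref{Lemma2.1} and $\|v\|_E^2<\chi$ give
\[
\frac2p\int_{\R^3}|x|^{-b}|v|^p\,dx\le\frac{2\mathcal S}{p\,\mathcal C_1^p}\chi^{\frac{p-2}{2}}\|v\|_E^2 .
\]
Hence $I(v)\ge\bigl(1-\kappa a^{\frac{p-2}{2}}\bigr)\|v\|_E^2$, where $\kappa=\frac{2^{\frac p2}(m+1)^{\frac{p-2}{2}}\mathcal S}{p\,\mathcal C_1^p}$. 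Combined with $\|v\|_E^2\ge\frac{3\chi}{4}=\frac32(m+1)a$, this yields
\[
\inf_{\text{annulus}}\Phi_E\ge\frac32(m+1)\bigl(1-\kappa a^{\frac{p-2}{2}}\bigr)a .
\]

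\emph{Comparison.} It remains to check that
\[
\frac32(m+1)\bigl(1-\kappa a^{\frac{p-2}{2}}\bigr)>m ,
\]
which holds as soon as $\kappa a^{\frac{p-2}{2}}<\frac{m+3}{3(m+1)}$; it suffices that $\kappa a^{\frac{p-2}{2}}<\frac13$. This smallness is where the definition of $a^*$ enters. The second and third terms in the minimum defining $a^*$ force $\kappa a^{\frac{p-2}{2}}$ to be well below $\frac13$. For instance, the second term already gives $\bigl(\frac{3m+2}{m}+2^p\bigr)\kappa a^{\frac{p-2}{2}}<1$, and since $\frac{3m+2}{m}+2^p>3$ this yields $\kappa a^{\frac{p-2}{2}}<\frac13$.

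The only delicate point is this constant bookkeeping, i.e.\ confirming that the thresholds in $a^*$ are compatible with the strict gap. Conceptually the argument is just $\Phi_E\le\|v\|_E^2$ versus $\Phi_E\ge I(v)$. Lemma~\ref{Lemma3.5} is not needed for this separation.
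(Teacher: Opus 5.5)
Your proof is correct, and it is lighter than the paper's on both sides of the inequality.

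On the annulus, the paper does not use $\Phi_E(v)\ge I(v)$ directly. It expands $I(\phi(v))$ and controls $\|(\phi(v))^-\|_{L^2}$, $\|(\phi(v))^-\|_E$ and $\int_{\R^3}|x|^{-b}|\phi(v)|^p\,dx$ through \eqref{3.16}--\eqref{3.18}, which yields $\Phi_E(v)\ge\frac{3ma}{2}$. Your chain $\Phi_E(v)\ge I(v)\ge(1-\kappa a^{\frac{p-2}{2}})\|v\|_E^2\ge\frac32(m+1)(1-\kappa a^{\frac{p-2}{2}})a$ is shorter. It needs only $\kappa a^{\frac{p-2}{2}}<\frac{m+3}{3(m+1)}$, which the second entry of $a^*$ supplies with room to spare: that entry actually gives $(\frac{3m+2}{m}+2^p)\kappa a^{\frac{p-2}{2}}<\frac{1}{4(m+1)}$.

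On the small ball, the paper combines Lemma~\ref{Lemma3.5}, the dilation $w_\epsilon$, and $\frac{3p}{2}-3+b<2$ (a consequence of $p<3-b$) to prove the \emph{strict} bound $\inf_{S_a\cap\mathcal{V}^+_{\chi/2}}\Phi_E<ma$. You use only $\Phi_E(v)\le\|v\|_E^2$ and obtain $\le ma$. That suffices for this lemma, because your annulus bound lies strictly above $ma$.

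What the paper's heavier route buys is exactly that strict inequality. It is proved inside this lemma and then quoted in Lemma~\ref{Lemma3.7} to get $e_a<ma$, which is what gives $\mu<m$ in the proof of Theorem~\ref{Th1}. So Lemma~\ref{Lemma3.5} is indeed dispensable for the statement of Lemma~\ref{Lemma3.6}, as you say. However, if your proof replaced the paper's, the bound $\inf_{S_a\cap\mathcal{V}^+_{\chi/2}}\Phi_E<ma$ would have to be established separately.
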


\begin{proof}
Fix $a\in(0,a^*)$. For any $v\in S_a\cap(\mathcal{V}_\chi^+\setminus\mathcal{V}_{\frac{3\chi}{4}}^+)$, it follows from Lemma \ref{Lemma3.4} that $\phi(v)\in S_{a,W}$ with $\Phi_E(v)=I(\phi(v))=\sup\limits_{u\in S_{a,W}}I(u)$. We have the upper bound
\begin{equation}\label{3.15}
\begin{aligned}
I(\phi(v))&=\|(\phi(v))^+\|_E^2-\|(\phi(v))^-\|_E^2-\frac{2}{p}\int_{\R^3}|x|^{-b}|\phi(v)|^p dx\\
&\le \|(\phi(v))^+\|_E^2-\|(\phi(v))^-\|_E^2 \\
&\le \|v\|_E^2-\|(\phi(v))^-\|_E^2.
\end{aligned}
\end{equation}
On the other hand,
\[
I(\phi(v)) = \sup_{u\in S_{a,W}} I(u) \ge I(v) = \|v\|_E^2 - \frac{2}{p}\int_{\R^3}|x|^{-b}|v|^p dx,
\]
which together with \eqref{3.15} gives
\begin{equation}\label{3.16}
\|(\phi(v))^-\|_E^2 \le \frac{2}{p}\int_{\R^3}|x|^{-b}|v|^p dx.
\end{equation}
Using \eqref{3.16}, $\|u\|_E^2 \ge m\|u\|_{L^2}^2$, $\|u\|_{L^p(|x|^{-b})}^p \le \mathcal{S}\|u\|_{H^{\frac{1}{2}}}^p$, and $E \cong H^{\frac{1}{2}}(\R^3;\C^4)$, we deduce that
\begin{equation}\label{3.17}
\|(\phi(v))^-\|_{L^2}^2 \le \frac{1}{m}\|(\phi(v))^-\|_E^2 \le \frac{2}{pm}\int_{\R^3}|x|^{-b}|v|^p dx \le \frac{2\mathcal{S}}{pm\mathcal{C}_1^p}\|v\|_E^p.
\end{equation}
Since $I(\phi(v))>0$, we also have $\|(\phi(v))^-\|_E^2 \le \|(\phi(v))^+\|_E^2 \le \|v\|_E^2$, whence
\begin{equation}\label{3.18}
\frac{2}{p}\int_{\R^3}|x|^{-b}|\phi(v)|^p dx \le \frac{2\mathcal{S}}{p\mathcal{C}_1^p}\|\phi(v)\|_E^p \le \frac{2\mathcal{S}}{p\mathcal{C}_1^p}
\bigl(\|v\|_E+\|(\phi(v))^-\|_E\bigr)^p \le \frac{2^{p+1}\mathcal{S}}{p\mathcal{C}_1^p}\|v\|_E^p.
\end{equation}
From \eqref{3.16}--\eqref{3.18} we therefore obtain
\[
\begin{aligned}
I(\phi(v)) &= \|(\phi(v))^+\|_E^2-\|(\phi(v))^-\|_E^2
-\frac{2}{p}\int_{\R^3}|x|^{-b}|\phi(v)|^p dx \\
&= \|v\|_E^2-\frac{1}{a}\|(\phi(v))^-\|_{L^2}^2\|v\|_E^2
-\|(\phi(v))^-\|_E^2-\frac{2}{p}\int_{\R^3}|x|^{-b}|\phi(v)|^p dx \\
&\ge \frac{3m+2}{2}a + \left[\frac12-\frac{\mathcal{S}(\frac{3m+2}{m}+2^{p})2^{\frac{p+2}{2}}(m+1)^{\frac{p}{2}}}
{p\mathcal{C}_1^p}a^{\frac{p-2}{2}}\right]a\\
&>\frac{3ma}{2}.
\end{aligned}
\]
Then we have
\begin{equation}\label{3.19}
\inf_{v\in S_a\cap(\mathcal{V}_\chi^+\setminus\mathcal{V}_{\frac{3\chi}{4}}^+)} \Phi_E(v)\ge \frac{3ma}{2}.
\end{equation}
The following is the upper bound estimate of $\inf\limits_{v\in S_a\cap\mathcal{V}_{\frac\chi2}^+} \Phi_E(v)$.
Choose $v = U_{FW}^{-1}\begin{pmatrix}
w  \\
0
\end{pmatrix} \in S_a\cap\mathcal{V}_{\frac\chi2}^+$ with $w\in H^1(\R^3;\C^2)$ and $\|w\|_{L^2}^2=a$. Then $\|v\|_{L^2}^2=a$ and $\|v\|_{H^{\frac12}}=\|w\|_{H^{\frac12}}$. Moreover,
\[
\|v\|_E^2 = \int_{\R^3}\lambda(\xi)|\hat{v}(\xi)|^2 d\xi = \int_{\R^3}\sqrt{m^2+|\xi|^2} |\hat{v}(\xi)|^2 d\xi,
\]
so
\begin{equation}\label{3.20}
0 \le \|v\|_E^2 - m\|v\|_{L^2}^2 = \int_{\R^3}(\sqrt{m^2+|\xi|^2}-m)|\hat{v}(\xi)|^2 d\xi \le \frac{1}{2m}\|\nabla w\|_{L^2}^2.
\end{equation}
For such $v$, Lemma \ref{Lemma3.4} gives $\phi(v)\in S_{a,W}$ with $\Phi_E(v)=I(\phi(v))$. Write $\phi(v)=tv+(\phi(v))^-$ where $t=\sqrt{1-\frac{1}{a}\|(\phi(v))^-\|_{L^2}^2}$. Using the estimates \eqref{3.14}, \eqref{3.17}, \eqref{3.20}, we obtain
\begin{align*}
I(\phi(v)) &= \|(\phi(v))^+\|_E^2 - \|(\phi(v))^-\|_E^2 - \frac{2}{p}\int_{\R^3}|x|^{-b}|\phi(v)|^p dx \\
&= \|v\|_E^2 - \frac{1}{a}\|(\phi(v))^-\|_{L^2}^2\|v\|_E^2 - \|(\phi(v))^-\|_E^2 - \frac{2}{p}\int_{\R^3}|x|^{-b}|\phi(v)|^p dx \\
&\le ma + \frac{1}{2m}\|\nabla w\|_{L^2}^2 + \frac{4^{1-p}\mathcal{S}}{pm^{\frac p2}\mathcal{C}_1^p}\|\nabla w\|_{L^2}^p
-\frac{2^{3-2p}}{p}\int_{\R^3}|x|^{-b}|w|^p dx\\
&\quad-\left[1-\frac{\mathcal{S}(m+1)^{\frac{p-2}{2}}}{2^{p-1}\mathcal{C}_1^p}a^{\frac{p-2}{2}}\right]
\frac{\|(\phi(v))^-\|_{L^2}^2\|v\|_E^2}{a} \\
&\quad - \left[1-\left(\frac{2\mathcal{S}}{p\mathcal{C}_1^p}\right)^{\frac p2}(m+1)^{\frac{p(p-2)}{4}}a^{\frac{p(p-2)}{4}}\right]\|(\phi(v))^-\|_E^2
.
\end{align*}
Then we have
\[
I(\phi(v)) \le ma + \frac{1}{2m}\|\nabla w\|_{L^2}^2 + \frac{4^{1-p}\mathcal{S}}{pm^{\frac p2}\mathcal{C}_1^p}\|\nabla w\|_{L^2}^p - \frac{2^{3-2p}}{p}\int_{\R^3}|x|^{-b}|w|^p dx.
\]
Now take $w_\epsilon(x)=\epsilon^{\frac{3}{2}}w(\epsilon x)$ and set $v_\epsilon = U_{FW}^{-1}\begin{pmatrix}
w_\epsilon  \\
0
\end{pmatrix}$, $W_\epsilon=span\{v_\epsilon\}$. Then we have
\[
\|\nabla w_\epsilon\|_{L^2}^2 = \epsilon^2\|\nabla w\|_{L^2}^2 \quad and  \quad \int_{\R^3}|x|^{-b}|w_\epsilon(x)|^pdx
=\epsilon^{\frac{3p}{2}-3+b}\int_{\R^3}|x|^{-b}|w|^p dx.
\]
Hence
\begin{align*}
\inf_{v\in S_a\cap\mathcal{V}_{\frac\chi2}^+} \Phi_E(v)  - ma&\le\sup\limits_{u\in S_{a,W_\epsilon}} I(u)- ma\le I(\phi(v_\epsilon)) - ma \\
&\le \frac{\epsilon^2}{2m}\|\nabla w\|_{L^2}^2 + \frac{4^{1-p}\mathcal{S}\epsilon^p}{pm^{\frac p2}\mathcal{C}_1^p}\|\nabla w\|_{L^2}^p\\
&\quad-\frac{2^{3-2p}\epsilon^{\frac{3p}{2}-3+b}}{p}\int_{\R^3}|x|^{-b}|w|^p dx.
\end{align*}
Since $p\in(2,3-b)$, we have $\frac{3p}{2}-3+b < 2$. Letting $\epsilon\to0$ shows that the right-hand side becomes negative, so we have
\[\inf\limits_{v\in S_a\cap\mathcal{V}_{\frac\chi2}^+} \Phi_E(v)  < ma,
\]
which together with \eqref{3.19} implies
\[
\inf_{v\in S_a\cap\mathcal{V}_{\frac\chi2}^+} \Phi_E(v) <\inf_{v\in S_a\cap(\mathcal{V}_\chi^+\setminus\mathcal{V}_{\frac{3\chi}{4}}^+)} \Phi_E(v).
\]
This completes the proof.
\end{proof}
We now establish the following estimates for the energy $e_a$, which will play a crucial role in the subsequent analysis.
\begin{lemma}\label{Lemma3.7}
Let $a \in (0, a^*)$. Then  $e_a\in[\frac{ma}{2},ma)$.
\end{lemma}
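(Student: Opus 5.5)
The estimate $e_a\in[\frac{ma}{2},ma)$ splits into a lower bound and a strict upper bound, and I will take them separately. Both rest on the definition $e_a=\inf_{v\in S_a\cap\mathcal{V}_\chi^+}\Phi_E(v)$, with $\Phi_E(v)=\Psi_W$ for $W=\operatorname{span}\{v\}$.

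\emph{Lower bound.} Lemma~\ref{Lemma3.3} gives $\Psi_W>\frac{ma}{2}$ for every $v\in S_a\cap\mathcal{V}_\chi^+$ whenever $a\in(0,a_1)$. Since $a^*\le a_1$, this holds for all $a\in(0,a^*)$. Taking the infimum over $v$ yields $e_a\ge\frac{ma}{2}$; the strict inequality may be lost in the infimum, which is why the statement uses a closed endpoint.

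If one wants a self-contained lower bound not routed through Lemma~\ref{Lemma3.3}, one can argue directly from $\Phi_E(v)\ge I(v)=\|v\|_E^2-\frac{2}{p}\int|x|^{-b}|v|^p\,dx$. Lemma~\ref{Lemma2.1} bounds the integral by $\frac{\mathcal{S}}{\mathcal{C}_1^p}\|v\|_E^p$. On $\mathcal{V}_\chi^+$ we have $\|v\|_E^{p-2}<\chi^{\frac{p-2}{2}}=(2(m+1)a)^{\frac{p-2}{2}}$, so $I(v)\ge(1-Ca^{\frac{p-2}{2}})\|v\|_E^2$. Combined with $\|v\|_E^2\ge ma$, the smallness built into $a^*$ makes the bracket at least $\frac12$. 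This is exactly the computation already carried out in Lemma~\ref{Lemma3.3}.

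\emph{Upper bound.} Since $S_a\cap\mathcal{V}_{\chi/2}^+\subset S_a\cap\mathcal{V}_\chi^+$, we have $e_a\le\inf_{v\in S_a\cap\mathcal{V}_{\chi/2}^+}\Phi_E(v)$. The proof of Lemma~\ref{Lemma3.6} already shows this infimum is strictly below $ma$, so $e_a<ma$. To make this step explicit, I would recall the construction there. Take $v_\epsilon=U_{\mathrm{FW}}^{-1}(w_\epsilon,0)^T$ with $w_\epsilon=\epsilon^{3/2}w(\epsilon\,\cdot)$. Then
\[
I(\phi(v_\epsilon))-ma\le\frac{\epsilon^2}{2m}\|\nabla w\|_{L^2}^2+C\epsilon^p\|\nabla w\|_{L^2}^p-c\,\epsilon^{\frac{3p}{2}-3+b}\int|x|^{-b}|w|^p\,dx.
\]
Because $p<3-b$, the exponent $\frac{3p}{2}-3+b$ is less than $2<p$, so the negative term dominates as $\epsilon\to0$.

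One point must be checked: $v_\epsilon$ actually lies in $\mathcal{V}_{\chi/2}^+$, i.e.\ $\|v_\epsilon\|_E^2<(m+1)a$. This follows from \eqref{3.20}, which gives $\|v_\epsilon\|_E^2\le ma+\frac{\epsilon^2}{2m}\|\nabla w\|_{L^2}^2$, and this is less than $(m+1)a$ for small $\epsilon$.

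The only real subtlety is the upper bound's reliance on the estimate for $I(\phi(v))$ in Lemma~\ref{Lemma3.6}. That estimate discards the terms involving $\|(\phi(v))^-\|$ because their brackets are positive for $a<a^*$. I would simply invoke Lemma~\ref{Lemma3.6} for this rather than redo the computation.
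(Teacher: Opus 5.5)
Your proposal is correct and follows the paper's proof. The lower bound comes from taking the infimum of $\Psi_W>\frac{ma}{2}$ from Lemma~\ref{Lemma3.3}, which applies since $a^*\le a_1$, and the upper bound from $S_a\cap\mathcal{V}_{\chi/2}^+\subset S_a\cap\mathcal{V}_{\chi}^+$ together with the scaling argument in the proof of Lemma~\ref{Lemma3.6}. Your explicit check via \eqref{3.20} that $v_\epsilon$ lies in $S_a\cap\mathcal{V}_{\chi/2}^+$ for small $\epsilon$ is correct and is a detail the paper leaves implicit.
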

\begin{proof}
Since $\mathcal{V}_{\frac\chi2}^+\subset \mathcal{V}_\chi^+$, we deduce
\[
e_a= \inf_{v\in S_a\cap\mathcal{V}_\chi^+} \Phi_E(v)\le\inf_{v\in S_a\cap\mathcal{V}_{\frac\chi2}^+} \Phi_E(v) <ma.
\]
Conversely, by the definition of $e_a$ and the fact that $\frac{ma}{2} < \Psi_W=\sup\limits_{u\in S_{a,W}} I(u) \le \|v\|_E^2$, we obtain the lower bound
\[
e_a = \inf_{\substack{v\in S_a\cap\mathcal{V}_\chi^+ \\ W=\operatorname{span}\{v\}}} \sup_{u\in S_{a,W}} I(u) \ge \frac{ma}{2}.
\]
This completes the proof.
\end{proof}
\begin{proof}[\bf Proof of Theorem \ref{Th1}]
Let $\{v_n\}\subset S_a\cap\mathcal{V}_{\frac{3\chi}{4}}^+$ be a minimizing sequence for $\Phi_E$ at level $e_a$. By the Ekeland's variational principle, we obtain a new minimizing sequence $\{v_n^*\}\subset S_a\cap\mathcal{V}_{\chi}^+$ for $e_a$ such that $\|v_n^*-v_n\|_E\to0$ as $n\to\infty$ and $\{v_n^*\}$ is also a Palais-Smale sequence for $\Phi_E$ on $S_a$, i.e.,
\[
\Phi_E(v_n^*)=I(\phi(v_n^*))\to e_a,\quad \|d\Phi_E(v_n^*)\|\to 0.
\]
Set $u_n:=\phi(v_n^*)$. Since each $u_n$ is a constrained critical point of $I$ on $S_{a,W_n}$ with $W_n=\operatorname{span}\{v_n^*\}$, we have
\begin{equation}\label{3.21}
\sup_{\|h\|_{H^{\frac{1}{2}}}=1}\bigl| dI(u_n)[h]-2\mu(u_n)\Re(u_n,h)_{L^2} \bigr|\to 0.
\end{equation}
We first establish that $\{u_n\}$ is bounded in $H^{\frac{1}{2}}(\mathbb{R}^3;\mathbb{C}^4)$.
From the definition of $S_a\cap\mathcal{V}_{\chi}^+$, it follows that $\{v_n^*\}$ is bounded in $H^{\frac{1}{2}}(\mathbb{R}^3;\mathbb{C}^4)$.
Moreover, using the norm equivalence $E \cong H^{\frac{1}{2}}(\mathbb{R}^3;\mathbb{C}^4)$, the inequality $\|u_n^-\|_E \le \|u_n^+\|_E$, and the boundedness of $\|v_n^*\|_{H^{\frac{1}{2}}}$, we obtain
\[
a = \|u_n\|_{L^2}^2 \le \|u_n\|_{H^{\frac{1}{2}}}^2 \le \frac{4\mathcal{C}_2^2}{\mathcal{C}_1^2 } \|v_n^*\|_{H^{\frac{1}{2}}}^2,
\]
which implies the boundedness of $\{u_n\}$ in $H^{\frac{1}{2}}(\mathbb{R}^3;\mathbb{C}^4)$.

Repeating the argument in Lemma \ref{Lemma3.1}, we conclude that $\liminf\limits_{n\to\infty} \mu(u_n) > 0$.
Furthermore, since
\[
dI(u_n)[u_n] = 2\mu(u_n) \|u_n\|_{L^2}^2 = 2\mu(u_n) a,
\]
we have $\mu(u_n) a \le \|u_n^+\|_E^2 \le \|v_n^*\|_E^2$.
Hence the sequence $\{\mu(u_n)\}$ is bounded from above. Together with the positive lower bound, this shows that $\{\mu(u_n)\}$ is bounded, and passing to a subsequence we may assume that $\mu(u_n) \to \mu$.

Now we consider $I_\mu(u)=I(u)-\mu\|u\|_{L^2}^2$. From \eqref{3.21} we see that $\{u_n\}$ is a Palais-Smale sequence for $I_\mu$ at level $e_a$. Indeed,
\begin{align*}
\|dI_\mu(u_n)\|&=\sup_{\|h\|_{H^{1/2}}=1}\bigl|dI(u_n)[h]-2\mu(u_n)\Re(u_n,h)_{L^2}+2(\mu-\mu(u_n))\Re(u_n,h)_{L^2}\bigr|\\
&\quad\to0.
\end{align*}

We show that the limit $\mu$ satisfies $0<\mu<m$. Using the identity
\[
dI(u_n)[u_n]=2\|u_n^+\|_E^2-2\|u_n^-\|_E^2-2\int_{\mathbb{R}^3}|x|^{-b}|u_n|^p dx=2\mu(u_n)\|u_n\|_{L^2}^2+o(1),
\]
we obtain
\[
I(u_n)-\frac12 dI(u_n)[u_n]=\Bigl(1-\frac2p\Bigr)\int_{\mathbb{R}^3}|x|^{-b}|u_n|^p dx+o(1)\ge0.
\]
Thus
\[
\mu(u_n)a=I(u_n)-\Bigl(1-\frac2p\Bigr)\int_{\mathbb{R}^3}|x|^{-b}|u_n|^p dx+o(1)\le I(u_n)+o(1).
\]
Taking the limit yields $\mu a\le e_a$. On the other hand, from Lemma \ref{Lemma3.7} we have $e_a<ma$. Consequently, $\mu a<ma$, i.e., $\mu<m$. Moreover, repeating the steps of Lemma \ref{Lemma3.1}, we obtain $\mu>0$. Hence $0<\mu<m$.

Finally, we prove the strong convergence of $\{u_n\}$ in $E$. Because $\{u_n\}$ is bounded in $E$ (equivalently in $H^{\frac12}(\R^3;\C^4)$), we may assume $u_n\rightharpoonup u$ in $E$. Then it follows from Lemma \ref{Lemma2.1} that
\begin{align*}
\|u_n^+-u^+\|_E^2&=
\int_{\R^3}\bigl(|x|^{-b}|u_n|^{p-2}u_n-|x|^{-b}|u|^{p-2}u\bigr)(u_n^+-u^+) dx\\
&\quad+\langle dI_\mu(u_n)-dI_\mu(u),u_n^+-u^+\rangle+\mu\|u_n^+-u^+\|_{L^2}^2+o(1)\\
&\le \frac{\mu}{m}\|u_n^+-u^+\|_E^2+o(1),
\end{align*}
which implies $\|u_n^+-u^+\|_E^2=o(1)$. Similarly, we obtain $\|u_n^--u^-\|_E^2=o(1)$. Hence $\|u_n-u\|_E\to0$ as $n\to\infty$, which gives $\|u\|_{L^2}^2=a$. Thus $(u,\mu)\in E\times(0,m)$ is a weak normalized solution of \eqref{1.1}.
\end{proof}

% ========== 声明部分 ==========
\section*{Statements and Declarations}

\subsection*{Declaration of conflicting interests}
The authors declared no potential conflicts of interest with respect to the research, authorship, and/or publication of this article.

\subsection*{Funding}
This work is partially supported by the NSFC (12471102), NSF of Jilin Province (20250102004JC), and Research Project of the Education Department of Jilin Province (JJKH20250296KJ).

\subsection*{Data availability statement}
Data sharing not applicable to this article as no datasets were generated or analysed during
the current study.

% ========== 参考文献（Sage Vancouver 数字上标格式）==========

\end{document}